\newcommand{\er}{\mathbb{R}}
\newcommand{\B}{\mathcal{B(H)}}
\newcommand{\R}{\mathcal{R}}
\newcommand{\n}{\mathcal{N}}
\newcommand{\h}{\mathcal{H}}
\newcommand{\m}{\mathcal{M}}
\newcommand{\ka}{\mathcal{K}}
\newtheorem{theorem}{Theorem}[section]
\newtheorem{proposition}[theorem]{Proposition}
\newtheorem{corollary}[theorem]{Corollary}
\theoremstyle{definition}
\theoremstyle{definition}
\newtheorem{remark}[theorem]{Remark}
\numberwithin{equation}{section}
\title[Partial isometries and the conjecture of C. K. Fong and S. K. Tsui]{Partial isometries and the conjecture \\ of C. K. Fong and S. K. Tsui}
\author[M. Mbekhta]{Mostafa Mbekhta}
\address{ UFR de Math\'ematiques, Laboratoire Paul Painlev\'e, UMR CNRS 8524,
	Universit\'e Lille 1, 59655 Villeneuve d'Ascq Cedex, France}
\email{mbekhta@math.univ-lille1.fr}
\author[L. Suciu]{Laurian Suciu}
\address{Department of Mathematics, "Lucian Blaga" University
	of Sibiu, Dr. Ion Ra\c tiu 5-7, Sibiu, 550012, Romania}
\email{laurians2002@yahoo.com}
\subjclass[2010]{Primary 47A10, 47A45; Secondary 47A20, 47A35, 47B15}
\keywords{Partial isometry, quasi-isometry, nilpotent operator, Brownian isometry.}
\begin{document}
\begin{abstract}
	We investigate some bounded linear operators $T$ on a Hilbert space which satisfy the condition $|T|\le |{\rm Re} T|$. We describe the maximum invariant subspace for a contraction $T$ on which $T$ is a partial isometry to obtain that, in certain cases, the above condition ensures that $T$ is self-adjoint. In other words we show that the Fong-Tsui conjecture holds for partial isometries, contractive quasi-isometries, or $2$-quasi-isometries, and Brownian isometries of positive covariance, or even for a more general class of operators.	
	\end{abstract}
	\maketitle

\section{Introduction and terminology}
\medskip

For two complex Hilbert spaces $\h$ and $\ka$ we denote by $\mathcal{B}(\h,\ka)$ the Banach space of all bounded linear operators from $\h$ into $\ka$, and $\B=\mathcal{B}(\h,\h)$ considered as a Banach algebra with $I=I_{\h}$ the identity operator on $\h$. For $T\in \mathcal{B}(\h,\ka)$, $\R(T)$ and $\n(T)$ stand for the range and the null-space of $T$, respectively. For a subspace $\mathcal{G}$ of $\h$ its closure is denoted by $\overline{\mathcal{G}}$. As usually, a closed subspace $\mathcal{G}$ of $\h$ is invariant (reducing) for $T$ if $T\mathcal{G} \subset \mathcal{G}$ (and $T^* \mathcal{G} \subset \mathcal{G})$. Also, $T^* \in \mathcal{B}(\ka,\h)$ stands for the adjoint operator of $T$, while the orthogonal projection associated to a closed subspace $\mathcal{G}$ of $\h$ is denoted by $P_{\mathcal{G}}$, that is $P_{\mathcal{G}} \in \mathcal{B}(\h)$ with $P_{\mathcal{G}}^2=P_{\mathcal{G}}=P_{\mathcal{G}}^*$.

An operator $T\in \mathcal{B}(\h,\ka)$ is a {\it contraction} if $\|T\|\le 1$, and $T$ is a {\it partial isometry} when $T^*T$ is an orthogonal projection. In particular $T$ is an {\it isometry} if $T^*T=I_{\h}$, and {\it unitary} if $T$ is a surjective isometry. A unitary operator $U \in \B$ with $U^*=U$ is called a {\it symmetry}. A contraction $T$ is called {\it pure} if $\|Tx\|<\|x\|$ for any $x\in \h$, $x\neq 0$.

Also, we say that an operator $T \in \B$ is a {\it $m$-quasi-isometry} for some integer $m\ge 1$, if $T|_{\R(T^m)}$ is an isometry. The $1$-quasi-isometries are shortly called {\it quasi-isometries}, such operators being firstly studied [18, 19] and latterly in [20, 21], and other articles. The generalization to $m$-quasi-isometries for $m\ge 2$ appear in [8, 15, 16]. It was proved in [16] that a quasi-isometric contraction $T$ is subnormal, that is it has a normal extension, hence $T$ is hyponormal that is $TT^* \le T^*T$.

As usually, for $T\in \B$ we denote the module of $T$ by $|T|=(T^*T)^{1/2}$, and the real part of $T$ by ${\rm Re}T=\frac{1}{2}(T+T^*)$.

An interesting conjecture formulated by C. K. Fong and S. K. Tsui in [11] says that if $T$ satisfies the condition
\begin{align}\label{ec11}
|T|\le |{\rm Re}T|
\end{align}
then $T$ is self-adjoint.

This conjecture was partially proved in [11] in finite dimensional Hilbert spaces, in finite von Neumann algebras, and for compact operators in any Hilbert space.

Originally, C. K. Fong and V. Istr\u{a}\c tescu proved in [10] that $T$ is self-adjoint if and only if $|T|^2\le ({\rm Re}T)^2$. In particular, by Lemma 1.5 [10] it follows that the Fong-Tsui conjecture holds for hyponormal operators.

Recently, M. H. Mortad shows in [17] that this conjecture is also true when $T$ commutes with the partial isometry $U$ which appears in the polar decomposition of ${\rm Re}T$ ($U$ being a symmetry on $\overline{\R({\rm Re}T)}$).

A difficulty for the solution of this conjecture is the fact that $|T|$ and $|{\rm Re}T|$ cannot be easily expressed in terms of $T$ (and $T^*$), by contrast to $|T|^2$ and $({\rm Re}T)^2$. An idea is to transfer the conjecture on some reducing or just invariant parts of $T$ which may be easily expressed in $T, T^*$, and to investigate the condition \eqref{ec11} on each such part.

The purpose of this paper is to show that the Fong-Tsui conjecture is also true for some operators which are related to partial isometries, as well as those before.

In Section 2 we describe the maximum invariant subspace for a contraction $T$ on which $T$ is a partial isometry (Theorem \ref{te21}). We use some block matrix forms for $T$, and we refer to the case when this subspace is just $\n(T^*T-(T^*T)^2)$, and also to a special case when this latter subspace is reducing for $T$.

In Section 3 we give the main result (Theorem \ref{te31}) which asserts that for a contraction $T$ which satisfies the condition \eqref{ec11} the fixed points of $T^*T$, $TT^*$ and $|{\rm Re}T|$ coincide, and their subspace reduces $T$ to a symmetry. So, in this case the invariant partial isometric part of $T$ in $\h$ is $\n(I-T^*T)\oplus \n(T)$, which does not reduce $T$, in general.

As consequences, we derive that the Fong-Tsui conjecture holds for partial isometries, quasi-isometric or $2$-quasi-isometric contractions. In the case of an $m$-quasi-isometric contraction $T$ with $m\ge 3$ we obtain that $T$ is a symmetry on $\overline{\R(T^m)}$, this subspace being even the unitary part of $T$ in $\h$. In this case $T=S\oplus Q$ with $S$ a symmetry on $\overline{\R(T^m)}$ and $Q^m=0$. So $T=T^*$ if and only if $Q=0$.

Another special class of non-contractive operators for which the Fong-Tsui conjecture can be shown true is given by the Brownian isometries. This class containing the Brownian unitaries was extensively studied by J. Agler and M. Stankus in [1-3]. Such operators arise naturally in the context of $2$-isometries, that is of operators $T$ on $\h$ satisfying the identity $T^{*2}T^2-2T^*T+I=0$. According to [2, Proposition 5.37], a $2$-isometry $T$ on $\h$ is a Brownian isometry of covariance $\sigma >0$ if $\sigma ^2 =\|T^*T-I\|$ and, with respect to a decomposition $\h=\h_0\oplus \h_1$, $T$ has a block matrix form
\begin{align}\label{ec12}
T=
\begin{pmatrix}
V & \sigma E\\
0 & U
\end{pmatrix},
\end{align}
where $V$ is an isometry on $\h_0$, $E\in \mathcal{B}(\h_1,\h_0)$ is an injective contraction with $\R(E)\subset \n(V^*)$, while $U$ is unitary on $\h_1$ such that $UE^*E =E^*EU$.

Finally, we remark that, under the condition \eqref{ec11} for a contraction $T$, $T^*$ has a similar block matrix form like that of $T$ (given by Theorem \ref{te31}), without imposing the condition \eqref{ec11} for $T^*$. But these matrix representations of $T$ and $T^*$ cannot lead to a symmetric condition (in $T$ and $T^*$) as $T=T^*$, from a non-symmetric one as \eqref{ec11}. The Fong-Tsui conjecture remains an interesting open problem, in particular for pure contractions.
\medskip

\section{The invariant partial isometric part of a contraction}
\medskip

The main result of this section is the following

\begin{theorem}\label{te21}
For every contraction $T$ on $\h$ there exists the maximum subspace $\m$ which is invariant for $T$, on which $T$ is a partial isometry. More precisely, one has $\n(T) \subset \m \subset \n(T) \oplus \n(I-T^*T)$, and $T$ has the block matrix form
\begin{align}\label{ec21}
T=
\begin{pmatrix}
W & R\\
0 & Q
\end{pmatrix}
\end{align}
on $\h=\m \oplus \m^{\perp} $, where $W$ is a partial isometry on $\m$, $R\in \mathcal{B}(\m^{\perp},\m)$ is a contraction with $W^*R=0$, and $Q$ is a contraction on $\m^{\perp}$ such that $\n(I-Q^*Q)\subset \n(I-T^*T)$.

Moreover, we have
\begin{align}\label{ec22}
\n(T)\oplus \n(I-T^*T)=\m \oplus \n(I-Q^*Q)
\end{align}
if and only if $\n(I-T^*T) \cap \m^{\perp} \subset \n(R)$.
\end{theorem}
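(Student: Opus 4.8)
The plan is to characterize the maximum invariant partial-isometric subspace $\m$ as the largest closed subspace contained in $\n(T)\oplus\n(I-T^*T)$ that is invariant for $T$ and on which $T$ acts isometrically modulo its kernel, and then read off all the stated properties from the induced $2\times 2$ block decomposition. First I would establish existence and the inclusions $\n(T)\subset\m\subset\n(T)\oplus\n(I-T^*T)$: the family of closed subspaces $\mathcal G$ that are invariant for $T$ and on which $T$ is a partial isometry is nonempty (it contains $\n(T)$), closed under closed linear span (the span of two such subspaces is again invariant, and one checks partial isometry is preserved using that $T$ is a contraction, so no expansion of norm is possible), hence has a maximum element $\m$; and any vector $x\in\m$ decomposes as $x=x_0+x_1$ with $x_0\in\n(T|_\m)\subset\n(T)$ and $x_1$ in the orthogonal complement of that kernel inside $\m$, on which $T$ is isometric, forcing $x_1\in\n(I-T^*T)$.

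Next, writing $T$ in the block form \eqref{ec21} on $\h=\m\oplus\m^\perp$, the zero in the $(2,1)$-slot is exactly the statement that $\m$ is $T$-invariant; $W=T|_\m$ is a partial isometry by definition of $\m$; $R=P_\m T|_{\m^\perp}$ and $Q=P_{\m^\perp}T|_{\m^\perp}$ are contractions because $T$ is. The relation $W^*R=0$ should follow from the fact that $W$ is a partial isometry on $\m$ together with maximality: if $W^*R\neq 0$, one could enlarge the isometric part of $\m$ using a vector in $\m^\perp$ mapped into $\R(W)$, contradicting maximality; alternatively compute $T^*T$ in block form and use that on $\n(I-Q^*Q)$ the operator $T$ must again be isometric to get $R^*R+Q^*Q$ controlled, which also yields $\n(I-Q^*Q)\subset\n(I-T^*T)$ since a unit vector in $\m^\perp$ with $Q$ isometric and $W^*R=0$ is sent by $T$ to a vector of the same norm.

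For the ``moreover'' part, I would compute $T^*T$ explicitly from \eqref{ec21}:
\[
T^*T=\begin{pmatrix} W^*W & W^*R\\ R^*W & R^*R+Q^*Q\end{pmatrix}=\begin{pmatrix} W^*W & 0\\ 0 & R^*R+Q^*Q\end{pmatrix}
\]
using $W^*R=0$. A vector $x=x_1\oplus x_2\in\m\oplus\m^\perp$ lies in $\n(I-T^*T)$ iff $W^*Wx_1=x_1$ and $(R^*R+Q^*Q)x_2=x_2$; the first condition together with $\n(T)\subset\m$ puts $x_1$ in a subspace of $\m$, and one should check the full $\m$-component always splits off so that $\n(I-T^*T)=(\n(I-T^*T)\cap\m)\oplus(\n(I-T^*T)\cap\m^\perp)$ after adding $\n(T)\subset\m$. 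Then the identity \eqref{ec22} reduces to showing $\n(I-T^*T)\cap\m^\perp=\n(I-Q^*Q)$. Since $R^*R+Q^*Q\le I$ and $x_2$ is a unit eigenvector at $1$, we get $\|Rx_2\|^2+\|Qx_2\|^2=1$ with $\|Qx_2\|\le 1$, so $Q^*Qx_2=x_2$ holds precisely when $Rx_2=0$; this gives $\n(I-T^*T)\cap\m^\perp\subset\n(I-Q^*Q)$ always, and the reverse inclusion $\n(I-Q^*Q)\subset\n(I-T^*T)\cap\m^\perp$ is automatic (already recorded in the first part). Hence the two sides of \eqref{ec22} agree exactly when $\n(I-T^*T)\cap\m^\perp\subset\n(R)$, as claimed.

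The main obstacle I anticipate is the \emph{maximality} argument — both for existence of $\m$ (one must verify the supremum of an arbitrary chain, or the span of two members, of the relevant family is again invariant \emph{and} partially isometric, where the contraction hypothesis is essential) and for deducing $W^*R=0$ (showing that a nonzero $W^*R$ genuinely lets one build a strictly larger invariant partial-isometric subspace, which requires care in constructing the enlargement and checking it stays inside $\n(T)\oplus\n(I-T^*T)$). Everything after that is a routine block-matrix computation.
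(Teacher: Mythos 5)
Your existence argument takes a genuinely different route from the paper's and it does work. The paper constructs $\m$ explicitly as $\n(T)\oplus\n(T_2)$, where $T_2=P_{\h'}T|_{\n(I-T^*T)}$ and $\h'=\overline{\R(T^*T-(T^*T)^2)}$, and then verifies maximality against an arbitrary competitor $\m'$; you instead take the closed span of the whole family of invariant partial-isometric subspaces. The verification you defer is exactly the splitting you already record for $\m$ itself: since $T$ is a contraction, every invariant subspace $\mathcal{G}$ on which $T$ is a partial isometry satisfies $\mathcal{G}=(\mathcal{G}\cap\n(T))\oplus(\mathcal{G}\cap\n(I-T^*T))$, and since $\n(T)\perp\n(I-T^*T)$ the closed span of any such family inherits this orthogonal splitting, hence is again invariant with $T$ partially isometric on it. So that ``obstacle'' is surmountable, and your abstract approach is arguably cleaner for the existence statement (though it does not produce the explicit description $\h_1=\n(T_2)$ that the paper reuses in Theorem \ref{te31}).

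The genuine gap is $W^*R=0$: neither of your two suggested routes delivers it. The maximality/enlargement idea is the wrong tool, because the identity $W^*R=0$ holds for \emph{every} invariant subspace on which $T$ is a partial isometry, maximal or not (the paper itself invokes it for the competitor $\m'$), so maximality cannot be the mechanism; moreover, a witness $y\in\m^{\perp}$ with $W^*Ry\neq 0$ need not lie in $\n(T)\oplus\n(I-T^*T)$, so there is no natural candidate for the enlarged subspace. Your stated alternative (computing $T^*T$ and controlling $R^*R+Q^*Q$) only addresses the inclusion $\n(I-Q^*Q)\subset\n(I-T^*T)$, not the orthogonality of $\R(R)$ to $\R(W)$. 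The missing idea is to use the \emph{other} contractivity inequality, $TT^*\le I$: its $(1,1)$ block gives $WW^*+RR^*\le I_{\m}$, and since $W$ is a partial isometry $WW^*=P_{\R(W)}$, so $RR^*\le P_{\n(W^*)}$; testing against $x\in\R(W)$ gives $R^*x=0$, i.e.\ $W^*R=0$. (Alternatively, $I-T^*T\ge 0$ plus the range inclusion for positive $2\times 2$ block operators forces $\R(W^*R)\subset\n(W)\cap\overline{\R(W^*)}=\{0\}$, but you would need to say this.) Once $W^*R=0$ is in hand, your diagonalization of $T^*T$, the inclusion $\n(I-Q^*Q)\subset\n(I-T^*T)$, and the reduction of \eqref{ec22} to $\n(I-T^*T)\cap\m^{\perp}\subset\n(R)$ all go through essentially as in the paper --- modulo the slip where you assert that $\n(I-T^*T)\cap\m^{\perp}\subset\n(I-Q^*Q)$ holds ``always'': your own computation shows a unit vector $x_2$ there lies in $\n(I-Q^*Q)$ exactly when $Rx_2=0$, which is the whole point of the equivalence.
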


\begin{proof}
The required subspace $\m$ need to satisfy the inclusions
$$
\n(T)\subset \m \subset \n(T^*T-(T^*T)^2)=\n(T)\oplus \n(I-T^*T).
$$
But with respect to the decomposition
$$
\h=\n(T)\oplus \n(I-T^*T)\oplus \h'
$$
where $\h'=\overline{\R(T^*T-(T^*T)^2)}$, $T$ has a block matrix form
\begin{align}\label{ec23}
T=
\begin{pmatrix}
0 & T_0 & T_0'\\
0 & T_1 & T_1'\\
0 & T_2 & T_2'
\end{pmatrix},
\end{align}
with some appropriate contractions $T_j$ and $T_j'$. In particular one has $T_2=P_{\h'}T|_{\n(I-T^*T)}$, while the subspace $\h_1:=\n(T_2)$ is invariant for $T_2$. Therefore $\n(T)\oplus \h_1$ is invariant for $T$, and from the above representation of $T$ we can infer another block matrix form for $T$ on the decomposition $\h=\h_0\oplus \h_1 \oplus \h_2$, where $\h_0=\n(T)$ and $\h_2:= [\n(I-T^*T)\ominus \h_1] \oplus \h'$, as follows
\begin{align}\label{ec24}
T=
\begin{pmatrix}
0 & W_0 & R_0\\
0 & W_1 & R_1\\
0 & 0 & Q
\end{pmatrix}.
\end{align}
Here all operators are contractions between the corresponding subspaces of the decomposition of $\h$ and, in particular, for the operator $W\in \mathcal{B}(\h_0,\h_1)$ with the block matrix form
$$
W=
\begin{pmatrix}
0 & W_0\\
0 & W_1
\end{pmatrix},
$$
we have $W_0 =T_0|_{\h_1}=P_{\h_0}T|_{\h_1}$, $W_1=T_1|_{\h_1}=P_{\h_1}T|_{\h_1}$. Also, by considering the operator $R= \begin{pmatrix} R_0\\ R_1 \end{pmatrix} : \h_2 \to \h_0 \oplus \h_1 =: \m$, we can write $T$ in the form
$$
T=
\begin{pmatrix}
W & R\\
0 & Q
\end{pmatrix}
$$
on $\h=\m \oplus \m^{\perp}$. This representation gives
$$
T^*T=
\begin{pmatrix}
W^*W & W^*R\\
R^*W & R^*R+Q^*Q
\end{pmatrix},
$$
and since $\h_1 \subset \n(I-T^*T)$ one has $W^*W|_{\h_1}=I_{\h_1}$. This means
$$
W^*W=
\begin{pmatrix}
0 & 0\\
0 & W_0^*W_0+W_1^*W_1
\end{pmatrix}
=0\oplus I_{\h_1},
$$
that is $W$ is a partial isometry on $\m$. On the other hand, we have
$$
TT^*=
\begin{pmatrix}
WW^*+RR^* & RQ^*\\
QR^* & QQ^*
\end{pmatrix}
\le I,
$$
whence it follows $RR^* \le I_{\m}-P_{\R(W)}=P_{\n(W^*)}$, which means $W^*R=0$.

To show that $\m$ is the maximum invariant subspace for $T$ in $\h$ on which $T$ is a partial isometry, let us consider another such subspace $\m'\subset \h$. So, $T$ has a similar block matrix form on $\h=\m'\oplus (\m')^{\perp}$, namely
$$
T=
\begin{pmatrix}
W' & R'\\
0 & Q'
\end{pmatrix}
$$
with $W'$ a partial isometry on $\m'$ satisfying (as above) $W^{'*}R'=0$. Then
$$
T^*T=
\begin{pmatrix}
W'^*W' & 0\\
0 & R'^*R'+Q'^*Q'
\end{pmatrix}
$$
and this gives
$$
\n(W')\subset \n(T) \subset \m.
$$
As $W'$ is a partial isometry we have the inclusion
$$
\R(W'^*)=\n(I-W'^*W')\subset \n(I-T^*T) \cap \m'
$$
and, in fact, the equality holds here. Indeed, if $x=T^*Tx\in \m'$ and $x\perp \R(W'^*)$ then $0=W'x=TT^*x$ so $Tx=0$ that is $x=0$. Hence we get
$$
\m'=\n(W') \oplus \R(W'^*) \subset \n(T) \oplus \R(W'^*),
$$
which implies
$$
T\R(W'^*) =W'\R(W'^*) \subset \m'\subset \n(T) \oplus \n(I-T^*T)\cap \m'.
$$
We infer (by using the operator $T_2$ in \eqref{ec23}) that
$$
T_2\R(W'^*)=P_{\h'}T|_{\n(I-T^*T)\cap \m'}=0,
$$
which means $\R(W'^*)\subset \n(T_2)=\h_1$. Conclude (by an above inclusion) that
$$
\m'\subset \n(T) \oplus \h_1 =\m,
$$
that is $\m$ has the required maximality property.

Now we prove the other properties concerning the operator $Q$. Firstly, if $x\in \n(I-Q^*Q)$ then
$$
\|x\|=\|Qx\|=\|P_{\m^{\perp}}Tx\| \le \|Tx\|\le \|x\|,
$$
so $x\in \n(I-T^*T)$. This gives $\n(I-Q^*Q) \subset \n(I-T^*T).$

Clearly, the equality \eqref{ec22} is equivalent to the following :
$$
\n(I-T^*T) =\h_1 \oplus \n(I-Q^*Q).
$$
Let $x\in \n(I-T^*T)\cap \m^{\perp}$, so $x=T^*Tx$. Then
$$
Qx=P_{\m^{\perp}}TT^*Tx=P_{\m^{\perp}}Tx
$$
and we have
\begin{eqnarray*}
\|x\|^2 =\|Tx\|^2&=&\|P_{\m}Tx\|^2 + \|P_{\m^{\perp}}Tx\|^2\\
&=& \|Rx\|^2 + \|Qx\|^2.
\end{eqnarray*}
Hence $x\in \n(I-Q^*Q)$ if and only if $Rx=0$. This proves the last assertion of theorem, having in view the inclusion of $\n(I-Q^*Q)$ into $\n(I-T^*T)$ already quoted.
\end{proof}

\begin{corollary}\label{co22}
Let $T$ be a contraction on $\h$. Then the subspace $\n(T) \oplus \n(I-T^*T)$ is invariant for $T$ if and only if $Q$ (in \eqref{ec21}) is a pure contraction on $\m^{\perp}$.
\end{corollary}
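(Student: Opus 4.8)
The plan is to work directly with the block matrix form \eqref{ec21} of $T$ on $\h=\m\oplus\m^\perp$ together with the relations established in Theorem \ref{te21}, namely $W^*R=0$, $W^*W=0\oplus I_{\h_1}$, and $\n(I-Q^*Q)\subset\n(I-T^*T)$. First I would identify the subspace $\n(T)\oplus\n(I-T^*T)$ in these coordinates. Since $\n(T)\subset\m$ and, by \eqref{ec22} applied in the ``easy'' direction of Theorem \ref{te21}, the relevant decomposition question reduces to whether $\n(I-T^*T)\cap\m^\perp$ sits inside $\n(R)$. The first step is therefore to observe that $\n(T)\oplus\n(I-T^*T)$ is invariant for $T$ precisely when its part in $\m^\perp$, which is $\n(I-T^*T)\cap\m^\perp$, behaves well under $T$; concretely, for $x\in\n(I-T^*T)\cap\m^\perp$ one has $Tx=Rx+Qx$ with $Rx\in\m$ and $Qx\in\m^\perp$, and $\m$ is already contained in the subspace, so invariance of $\n(T)\oplus\n(I-T^*T)$ is equivalent to $Qx\in\n(I-T^*T)\cap\m^\perp$ for all such $x$.

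The second step is to turn this into a statement purely about $Q$. Using the computation from the proof of Theorem \ref{te21}, for $x\in\n(I-T^*T)\cap\m^\perp$ we have $\|x\|^2=\|Rx\|^2+\|Qx\|^2$, and moreover (since $\n(I-Q^*Q)\subset\n(I-T^*T)$ and the analogous identity holds one level down) a vector $y\in\m^\perp$ lies in $\n(I-T^*T)\cap\m^\perp$ exactly when $\|Ry\|^2+\|Qy\|^2=\|y\|^2$. So the invariance condition ``$Qx\in\n(I-T^*T)\cap\m^\perp$ whenever $x\in\n(I-T^*T)\cap\m^\perp$'' becomes the recursive assertion that $Q$ maps the set $\{y:\|Qy\|=\|y\|\}$ (after intersecting with the kernel of $R$, since on $\n(I-T^*T)\cap\m^\perp$ the equality $\|Qx\|=\|x\|$ forces $Rx=0$) into itself. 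In fact the cleanest route is: $\n(I-T^*T)\cap\m^\perp=\{x\in\m^\perp:Rx=0,\ \|Qx\|=\|x\|\}=\n(R)\cap\n(I-Q^*Q)$, using $W^*R=0$ to guarantee $R(\n(I-Q^*Q))\perp\R(W)$ doesn't interfere; then invariance means $Q\big(\n(R)\cap\n(I-Q^*Q)\big)\subset\n(R)\cap\n(I-Q^*Q)$.

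The third and decisive step is to show this is equivalent to $Q$ being pure, i.e.\ $\n(I-Q^*Q)=\{0\}$. One direction is immediate: if $Q$ is pure then $\n(R)\cap\n(I-Q^*Q)=\{0\}$, which is trivially $Q$-invariant, so $\n(T)\oplus\n(I-T^*T)$ is invariant. For the converse I would argue by contradiction: suppose $\n(I-Q^*Q)\neq\{0\}$ and pick $0\neq x\in\n(I-Q^*Q)$. Then $x\in\n(I-T^*T)$, and I need to show $x$ (or a related nonzero vector) lies in $\n(T)\oplus\n(I-T^*T)$ but is not mapped back into it, contradicting invariance; the natural candidate to examine is whether $Tx=Rx+Qx$ can escape. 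Here is where the maximality of $\m$ from Theorem \ref{te21} should be brought in: if $\n(T)\oplus\n(I-T^*T)$ were invariant and $Q$ were not pure, one could enlarge $\m$ to $\m\oplus\n(I-Q^*Q)$ and check — using $W^*R=0$, $R^*R+Q^*Q$, and the block computation of $T^*T$ on this larger subspace — that $T$ restricted there is again a partial isometry, contradicting the maximality of $\m$. I expect this last maximality argument to be the main obstacle: one must verify carefully that on $\m\oplus\n(I-Q^*Q)$ the operator $T$ has the required ``$\begin{pmatrix}W'&R'\\0&Q'\end{pmatrix}$'' shape with $W'$ a partial isometry and $W'^*R'=0$, which amounts to checking that $\n(I-Q^*Q)$ reduces the relevant piece and that the off-diagonal block $R|_{\n(I-Q^*Q)}$ lands in $\n(W^*)$ — the latter following from $W^*R=0$, and the partial-isometry property following from $W^*W=0\oplus I_{\h_1}$ together with $\|Qx\|=\|x\|$ on $\n(I-Q^*Q)$. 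Assembling these gives the contradiction and hence the corollary.
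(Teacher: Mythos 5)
The decisive gap is the identity you assert in your second step, namely $\n(I-T^*T)\cap\m^{\perp}=\n(R)\cap\n(I-Q^*Q)$: only the inclusion $\supset$ holds. For $x\in\n(I-T^*T)\cap\m^{\perp}$ the computation from Theorem \ref{te21} gives $\|x\|^2=\|Rx\|^2+\|Qx\|^2$, and from this one may only conclude that, \emph{for such} $x$, the conditions $Rx=0$ and $\|Qx\|=\|x\|$ are equivalent --- not that both hold. If your identity were automatic, the ``Moreover'' part of Theorem \ref{te21} would be vacuous, since \eqref{ec22} is precisely the statement $\n(I-T^*T)\cap\m^{\perp}=\n(I-Q^*Q)$ and the theorem presents it as equivalent to the nontrivial extra condition $\n(I-T^*T)\cap\m^{\perp}\subset\n(R)$. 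Your ``easy direction'' ($Q$ pure $\Rightarrow$ invariance) rests entirely on this identity: purity gives $\n(R)\cap\n(I-Q^*Q)=\{0\}$, but not $\n(I-T^*T)\cap\m^{\perp}=\{0\}$, so that direction is not established.

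In fact no repair is possible there. Take $\h=\C^4$ with orthonormal basis $e_1,\dots,e_4$ and $Te_1=0$, $Te_2=(e_1+e_3)/\sqrt2$, $Te_3=e_4/2$, $Te_4=e_2/2$. Then $T^*T=\mathrm{diag}(0,1,1/4,1/4)$, so $T$ is a contraction with $\n(T)=\C e_1$, $\n(I-T^*T)=\C e_2$, $\h'={\rm span}(e_3,e_4)$; since $P_{\h'}Te_2=e_3/\sqrt2\neq0$ we get $\h_1=\{0\}$ and $\m=\C e_1$. Here $Q^*Q=\mathrm{diag}(1/2,1/4,1/4)$ on $\m^{\perp}$, so $Q$ is pure, yet $Te_2\notin\C e_1\oplus\C e_2$, i.e.\ $\n(T)\oplus\n(I-T^*T)$ is \emph{not} invariant. (The same example shows that the inclusion $\overline{\R(I-QQ^*)}\subset\overline{\R(I-TT^*)}$ invoked in the paper's own proof of this direction fails --- the range of a compression of a positive operator need not lie in the range of the operator --- so the published argument for the ``if'' part has the same defect, and that implication of the corollary appears to be false as stated.) Your other direction, that invariance forces $Q$ pure, is sound in spirit and is essentially the paper's argument in a more roundabout form: if $\n(T)\oplus\n(I-T^*T)$ is invariant, then $T$ restricted to it is already a partial isometry (the compression of $T^*T$ there is the projection onto $\n(I-T^*T)$), so by maximality it equals $\m$, and then $\n(I-Q^*Q)\subset\n(I-T^*T)\cap\m^{\perp}=\{0\}$; there is no need to enlarge $\m$ by $\n(I-Q^*Q)$ and re-verify the whole block structure as you propose.
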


\begin{proof}
If $\n(T) \oplus \n(I-T^*T)$ is invariant for $T$ then it is reduced to $\m$, hence $\n(I-Q^*Q)=\{0\}$ by Theorem \ref{te21}. This means that the contraction $Q$ is pure. Conversely, assuming $Q$ pure, we have also $Q^* =T^*|_{\m^{\perp}}$ pure. So, $\m^{\perp}= \overline{\R(I-QQ^*)} \subset \overline{\R(I-TT^*)}$ which implies
$$
T\n(I-T^*T) =\n(I-TT^*) \subset \m \subset \n(T) \oplus \n(I-T^*T).
$$
Hence $\n(T) \oplus \n(I-T^*T)$ is invariant for $T$.
\end{proof}

\begin{remark}\label{re23}
\rm
Recall [14] that the maximum subspace invariant for a contraction $T$ on which $T$ is an isometry is $\n(I-S_T)$, where $S_T$ is the asymptotic limit of $T$, defined as the strong limit of the powers $T^{*n}T^n$, $n\ge 1$. Therefore $\n(I-S_T) \subset \m \cap \n(I-T^*T)$ and, in general, the inclusion is strict, because the powers $T^n$ are not always partial isometries on $\m$.

In the case that $\m=\n(T) \oplus \n(I-T^*T)$ and $\n(T^*)$ is invariant for $T$ (that is $\n(T^*) \subset \n(T)$), and if $\m^* \subset \h$ is the corresponding subspace for $T^*$ given by Theorem \ref{te21}, then $\m^* \subset \n(T^*)\oplus \n(I-TT^*)=\n(T^*) \oplus T\n(I-T^*T) \subset \m$. In addition, if $\n(I-TT^*)$ is invariant for $T$ then
$$
T\m^* =\n(T^*) \oplus T\n(I-TT^*) \subset \n(T^*) \oplus \n(I-TT^*)=\m^*,
$$
hence $\m^* $ reduces $T$ to a partial isometry.

Clearly, the maximum subspace which reduces $T$ to a partial isometry exists always, but it is different of $\m\cap \m^*$, in general. Its structure is more complicated, and will not be given here.

\end{remark}

In the following section we see that the subspace $\m$ has the form from Corollary \ref{co22} with $\n(I-T^*T)$ reducing for $T$, under the condition \eqref{ec11}, but $\m$ does not reduce $T$, in general.

\medskip

\section{On the Fong-Tsui conjecture}
\medskip

Remark firstly that in the Fong-Tsui conjecture one can suppose that $T$ is a contraction, because the condition \eqref{ec11} works simultaneously for $T$ and $T|_{\|T\|}$.

Concerning the structure of such a contraction we have the following main result.

\begin{theorem}\label{te31}
Let $T$ be a contraction on $\h$ satisfying the condition \eqref{ec11}. Then
\begin{align}\label{ec31}
\n(I-T^*T)=\n(I-TT^*)=\n(I-|{\rm Re}T|)
\end{align}
and this subspace reduces $T$ to a symmetry. Also, we have
\begin{align}\label{ec32}
\n({\rm Re}T)=\n(T)\cap \n(T^*), \quad \n(T^*)=\n({\rm Re}T)\oplus \n(T^*|_{\overline{\R(T^*)}}).
\end{align}

Moreover, one has $\n(T)=\n({\rm Re}T)$ if and only if $T=U\oplus Z$ with respect to a decomposition $\h=\mathcal{G} \oplus \mathcal{G}^{\perp}$, where $U$ is a symmetry, and $Z$ is a pure contraction satisfying the condition \eqref{ec11}.
\end{theorem}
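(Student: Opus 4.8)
The first equality \eqref{ec31} and the claim that $\n(I-T^*T)$ reduces $T$ to a symmetry are the technical core; I would establish these first, and then extract \eqref{ec32} and the final "moreover" statement as consequences. For \eqref{ec31}, the key observation is that the Fong--Tsui hypothesis \eqref{ec11} forces strong constraints on vectors where any of the three operators $T^*T$, $TT^*$, $|{\rm Re}T|$ acts as the identity. Concretely, if $x\in\n(I-T^*T)$ then $\|Tx\|=\|x\|$, and for a contraction $\|{\rm Re}T\|\le 1$, so from $|T|\le|{\rm Re}T|$ one gets $\|x\|=\||T|x\|\le\||{\rm Re}T|x\|\le\|x\|$, forcing $x\in\n(I-|{\rm Re}T|)$ and $|T|x=|{\rm Re}T|x$; conversely if $x\in\n(I-|{\rm Re}T|)$ the same chain (read in reverse, using $|T|\le|{\rm Re}T|$) gives $\||T|x\|=\|x\|$, i.e. $x\in\n(I-T^*T)$. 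So $\n(I-T^*T)=\n(I-|{\rm Re}T|)$ is essentially free. To pull in $TT^*$, I would use that on $\n(I-T^*T)$ the operator $T$ is an isometry, so $T$ maps this subspace onto a subspace of $\n(I-TT^*)$; the reverse inclusion, and the reducing property, come from showing that ${\rm Re}T$ acts as $\pm 1$ there. The cleanest route: for $x$ in this common subspace, $\langle({\rm Re}T)x,x\rangle = \Re\langle Tx,x\rangle$, and combining $\|Tx\|=\|x\|$ with $\||{\rm Re}T|x\|=\|x\|$ and the polar-decomposition structure of ${\rm Re}T$ should force $Tx=({\rm Re}T)x$, hence $Tx=T^*x$ on this subspace; then $T^*T=TT^*=I$ and $T=T^*$ there, which is exactly a symmetry, and symmetry of $T|_{\n(I-T^*T)}$ makes the subspace reducing.

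For \eqref{ec32}, I would argue directly with $2\,{\rm Re}T=T+T^*$. If $x\in\n(T)\cap\n(T^*)$ then visibly $({\rm Re}T)x=0$. For the reverse inclusion, suppose $({\rm Re}T)x=0$, i.e. $Tx=-T^*x$; then $\|Tx\|=\|T^*x\|$, and I want to deduce both are zero. Here is where \eqref{ec11} re-enters: $\||T|x\|\le\||{\rm Re}T|x\|=0$ gives $|T|x=0$, hence $Tx=0$, and then $T^*x=-Tx=0$. This simultaneously proves $\n({\rm Re}T)=\n(T)\cap\n(T^*)$ and that $\n({\rm Re}T)\subset\n(T^*)$. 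The orthogonal-sum decomposition $\n(T^*)=\n({\rm Re}T)\oplus\n(T^*|_{\overline{\R(T^*)}})$ then follows from the orthogonal splitting $\h=\n(T)\oplus\overline{\R(T^*)}$ together with $\n({\rm Re}T)=\n(T)\cap\n(T^*)$: a vector in $\n(T^*)$ decomposes along $\n(T)$ and $\overline{\R(T^*)}$, its $\n(T)$-component lies in $\n({\rm Re}T)$ and its $\overline{\R(T^*)}$-component in $\n(T^*)\cap\overline{\R(T^*)}=\n(T^*|_{\overline{\R(T^*)}})$.

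For the final equivalence, one direction is immediate: if $T=U\oplus Z$ with $U$ a symmetry and $Z$ a pure contraction satisfying \eqref{ec11}, then $\n({\rm Re}T)=\n({\rm Re}U)\oplus\n({\rm Re}Z)=\{0\}\oplus\n({\rm Re}Z)$ and $\n(T)=\{0\}\oplus\n(Z)$; applying the just-proved fact $\n({\rm Re}Z)=\n(Z)\cap\n(Z^*)$ to the contraction $Z$, and $\n(U)=\{0\}$, one checks $\n(T)=\n({\rm Re}T)$ holds precisely because the $U$-part contributes nothing on either side. For the forward direction, assume $\n(T)=\n({\rm Re}T)$. By \eqref{ec32} this equals $\n(T)\cap\n(T^*)$, so $\n(T)\subset\n(T^*)$, i.e. $\n(T^*)$ is invariant for $T$ in the sense of Remark \ref{re23}; combined with $\n(I-T^*T)$ reducing $T$ (from \eqref{ec31}) I expect to land in the situation of Corollary \ref{co22}, so that $\m=\n(T)\oplus\n(I-T^*T)$ reduces $T$. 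Then take $\mathcal{G}=\n(I-T^*T)$: on $\mathcal{G}$ the operator $T$ is the symmetry $U$ from \eqref{ec31}, and on $\mathcal{G}^\perp$ the compression $Z$ is a contraction with $\n(I-Z^*Z)=\{0\}$, i.e. pure, and it inherits \eqref{ec11} since $|T|\le|{\rm Re}T|$ restricts to the reducing part. The main obstacle I anticipate is the middle step of the first paragraph — squeezing $Tx=({\rm Re}T)x$ out of the two norm equalities $\|Tx\|=\||{\rm Re}T|x\|=\|x\|$. Norm equalities alone do not give operator equalities; I will need the geometry of the polar decomposition of ${\rm Re}T$ (the symmetry on $\overline{\R({\rm Re}T)}$ referenced in the introduction) together with a Cauchy--Schwarz / parallelogram argument to upgrade "equal norms" to "equal vectors," and to confirm that the whole subspace $\n(I-T^*T)$ is reducing rather than merely invariant.
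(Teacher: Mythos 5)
Your architecture is genuinely different from the paper's (which grinds through the $3\times 3$ block matrices of $T$, $T^*T$, $({\rm Re}T)^2$ inherited from Theorem \ref{te21} and invokes the lemma $I\le {\rm Re}\,W^2\Rightarrow W^2=I$ from [11]), and most of it can be made to work; but as written it contains a step that fails. The recurring culprit is the implication ``$|T|\le |{\rm Re}T|$ gives $\||T|x\|\le \||{\rm Re}T|x\|$'': for positive operators $A\le B$ does \emph{not} imply $\|Ax\|\le\|Bx\|$ (that would amount to $A^2\le B^2$, i.e.\ essentially the Fong--Istr\u{a}\c{t}escu criterion, which is the whole point at issue). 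In the forward direction this is harmless, because for $x\in\n(I-T^*T)$ one has $|T|x=x$ exactly, and the quadratic-form chain $\|x\|^2=\langle |T|x,x\rangle\le\langle|{\rm Re}T|x,x\rangle\le\|x\|^2$ forces $|{\rm Re}T|x=x$. But your claim that the reverse inclusion $\n(I-|{\rm Re}T|)\subset\n(I-T^*T)$ is ``essentially free'' by reading the chain backwards is wrong: from $|{\rm Re}T|x=x$ the hypothesis only yields $\langle|T|x,x\rangle\le\|x\|^2$, an upper bound that says nothing. The inclusion is true, but the correct argument is the one the paper uses (and which you reserve for a different step): write ${\rm Re}T=\widetilde U|{\rm Re}T|$ with $\widetilde U$ a symmetry on $\overline{\R({\rm Re}T)}$, so that $|{\rm Re}T|x=x$ gives
\begin{align*}
\|x\|=\|\widetilde U|{\rm Re}T|x\|=\|({\rm Re}T)x\|\le \tfrac12\bigl(\|Tx\|+\|T^*x\|\bigr)\le\|x\|,
\end{align*}
forcing $\|Tx\|=\|T^*x\|=\|x\|$, hence $x\in\n(I-T^*T)\cap\n(I-TT^*)$. (Note this direction does not even use \eqref{ec11}.)

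Once that repair is made, the step you flagged as the main obstacle does go through: equality in the triangle inequality with $\|Tx\|=\|T^*x\|=\|x\|$ forces $T^*x=Tx$, and then $T^*T(Tx)=T^*T(T^*x)=T^*(TT^*x)=T^*x=Tx$ shows $\n(I-T^*T)$ is invariant for $T$ and $T^*$, hence reducing, with $T$ a self-adjoint isometry there; $\n(I-TT^*)\subset\n(I-T^*T)$ then follows from $y=T(T^*y)$ with $T^*y\in\n(I-T^*T)$. So your route, corrected, is actually shorter than the paper's for \eqref{ec31}. Two further soft spots remain. First, in the second relation of \eqref{ec32} you assert that the $\n(T)$-component of a vector in $\n(T^*)$ lies in $\n({\rm Re}T)$; this is exactly what has to be proved, and it is not a formal consequence of $\h=\n(T)\oplus\overline{\R(T^*)}$ --- the paper extracts it from the refined block form of $T$ (with $W_0=0$, $R_1=0$), and you would need some structural input of that kind. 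Second, in the ``moreover'' equivalence your forward direction never actually uses the hypothesis $\n(T)=\n({\rm Re}T)$ beyond deducing $\n(T)\subset\n(T^*)$; you must make explicit that this is what forces $\n(T)$ (and hence $\mathcal G^\perp$) to reduce $T$, which is the paper's pivot ($R_0=0$). These are fixable, but the reverse inclusion in the first paragraph is the genuine gap: the direction you call free is precisely the one that needs an argument, and the operator inequality alone cannot supply it.
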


\begin{proof}
Consider the block matrix \eqref{ec24} of $T$ that is
$$
T=
\begin{pmatrix}
0 & W_0 & R_0\\
0 & W_1 & R_1\\
0 & 0 & Q
\end{pmatrix}
$$
on $\h=\h_0 \oplus \h_1 \oplus \h_2$, where $\h_0:= \n(T)$, $\h_1:= \n(P'T|_{\n(I-T^*T)})$, while $P'$ is the orthogonal projection onto $\h':= \overline{\R(T^*T-(T^*T)^2)}$, and $\h_2= [\n(I-T^*T) \ominus \h_1] \oplus \h'$. In addition, from the proof of Theorem \ref{te21} we have the relations
\begin{align}\label{ec33}
W_0^*W_0 + W_1^*W_1=I, \quad W_0^*R_0 + W_1^*R_1=0,
\end{align}
and
\begin{align}\label{ec34}
W_0W_0^* +R_0R_0^*\le I, \quad W_1W_1^*+R_1R_1^*\le I.
\end{align}

By a simple computation we get the representations :
$$
T^*T=
\begin{pmatrix}
0 & 0 & 0\\
0 & I & 0\\
0 & 0 & R_0^*R_0+R_1^*R_1+ Q^*Q
\end{pmatrix},
\quad {\rm Re}T=\frac{1}{2}
\begin{pmatrix}
0 & W_0 & R_0\\
W_0^* & 2 {\rm Re}W_1 & R_1\\
R_0^* & R_1^* & 2 {\rm Re}Q
\end{pmatrix}
$$
and respectively (by using the second relation of \eqref{ec33})
$$
({\rm Re}T)^2=\frac{1}{4}
\begin{pmatrix}
W_0W_0^* +R_0R_0^* & 2W_0 {\rm Re}W_1 +R_0R_1^* & W_0R_1 +2 R_0 {\rm Re}Q\\
2 ({\rm Re}W_1)W_0^*+R_1R_0^* & W_0^*W_0 + 4 ({\rm Re}W_1)^2 + R_1R_1^* & W_1R_1 + 2 R_1 {\rm Re}Q\\
R_1^* W_0^*+ 2 ({\rm Re}Q)R_0^* & R_1^* W_1^* + 2 ({\rm Re}Q)R_1^* & R_0^*R_0 +R_1^*R_1 + 4 ({\rm Re}Q)^2
\end{pmatrix}.
$$
Since the subspace $\h_1$ reduces $|T|$ to $I_{\h_1}$ and as ${\rm Re}T$ is a contraction, the condition \eqref{ec11} implies that $\h_1$ also reduces $|{\rm Re}T|$ and $|{\rm Re}T||_{\h_1}=I_{\h_1}$. As $|{\rm Re}T|^2=({\rm Re}T)^2$ we get from the above matrix representation that
\begin{align}\label{ec35}
\frac{1}{4} (W_0^*W_0+4({\rm Re}W_1)^2 + R_1R_1^*)=I_{\h_1}.
\end{align}
By using the former relation in \eqref{ec33} and the second relation in \eqref{ec34} we obtain
$$
I_{\h_1}\le {\rm Re} W_1^2,
$$
which means by [11, Corollary 3] that $W_1^2=I$. So, the relation \eqref{ec35} becomes $W_1W_1^*+R_1R_1^*=I_{\h_1}$ which leads to $I_{\h_1} +W_1R_1R_1^*W_1^*=W_1W_1^*$. But this gives $R_1=0$ and $W_1W_1^*=I_{\h_1}$, hence $W_1=W_1^*$, while by \eqref{ec33} this yields $W_0=0$. So, the block matrices of $T$, $T^*T$, ${\rm Re}T$ and $({\rm Re}T)^2$ have simpler forms.

Now, we have by Theorem \ref{te21} that $\n(I-Q^*Q)\subset \n(I-T^*T)\cap \h_2$, and we show next that $\n(I-T^*T) \cap \h_2 \subset \n(I-QQ^*)$. Indeed, let $x=T^*Tx \in \h_2$. To use the condition \eqref{ec11} we consider $|{\rm Re}T|$ to have the following block matrix form on $\h=\h_0\oplus \h_1\oplus \h_2$ (having in view that $|{\rm Re}T||_{\h_1}=I_{\h_1}$) :
$$
|{\rm Re}T|=
\begin{pmatrix}
A & 0 & B\\
0 & I & 0\\
B^* & 0 & C
\end{pmatrix},
$$
with some appropriate contractions $A, B$ and $C$ with $A,C\ge 0$. As $|{\rm Re}T|^2=({\rm Re}T)^2$ we obtain that $B^*B+C^2=\frac{1}{4}[R_0^*R_0 + (Q + Q^*)^2]$, so for $x$ as above we get (by \eqref{ec11})
$$
\|x\|^2 =\langle |T|x,x \rangle \le \langle |{\rm Re}T|x,x\rangle = \langle Cx,x \rangle \le \|x\|^2
$$
which means $Cx=x$ (because $C\ge 0$). Then the above equality together with the fact that $(R_0^*R_0+Q^*Q)x=T^*Tx=x$, lead to the relation
$$
\frac{3}{4} \|x\|^2 + \|Bx\|^2 =\frac{1}{4} \|Q^*x\|^2 + \frac{1}{2} \langle {\rm Re}Q^2x,x \rangle.
$$
Since $Q^2$ is a contraction it follows that
$$
\frac{1}{4} (\|x\|^2 - \|Q^*x\|^2) + \|Bx\|^2 \le 0,
$$
hence $Bx=0$ and $\|Q^*x\|=\|x\|$. So $x\in \n(I-QQ^*)$ and the inclusion $\n(I-T^*T) \cap \h_2 \subset \n(I-QQ^*)$ is proved.

Next, we consider the block matrix form of $TT^*$ on $\h=\h_0 \oplus \h_1 \oplus \h_2$, namely
$$
TT^*=
\begin{pmatrix}
R_0R_0^* & 0 & R_0Q^*\\
0 & I & 0 \\
QR_0^* & 0 & QQ^*
\end{pmatrix}.
$$
Since $T^*|_{\h_2}=Q^*$ and $T$ is a contraction we have $\n(I-QQ^*) \subset \n(I-TT^*)$, and from this representation of $TT^*$ one has also $\h_1 \subset \n(I-TT^*)$. So we infer that
\begin{align}\label{ec36}
\n(I-T^*T) =\h_1 \oplus \n(I-T^*T) \cap \h_2 \subset \h_1 \oplus \n(I-QQ^*) \subset \n(I-TT^*).
\end{align}
This means that $\n(I-TT^*)$ is invariant for $T^*$, hence $\n(I-T^*T)$ is also invariant for $T^*$.

To see that $\n(I-T^*T)$ just reduces $T$ we firstly remark from \eqref{ec11} that
$$
\n(I-T^*T) =\n(I- |T|) \subset \n(I- |{\rm Re}T|)=\n(I- ({\rm Re}T)^2).
$$
In fact, these subspaces coincide, they containing the subspace $\h_1$. To see this equality, let us consider the polar decomposition
$$
{\rm Re}T=\widetilde{U}|{\rm Re}T|,
$$
where $\widetilde{U}$ is a symmetry on $\overline{\R({\rm Re}T)}=\overline{\R(|{\rm Re}T|)}$. Clearly, one has
$$
\n(I-|{\rm Re}T|)=\n(\widetilde{U}-{\rm Re}T)
$$
and this subspace reduces the operators ${\rm Re}T$ and $\widetilde{U}$.

Let $x\in \n(\widetilde{U}-{\rm Re}T) \cap \h_2$ such that $x$ is orthogonal on $\n(I-T^*T)$, hence $\|Tx\|<\|x\|$. As $\h_2 \subset \overline{\R({\rm Re}T)}$ and $\widetilde{U}$ is unitary on this range, we get
$$
\|x\|=\|Ux\|=\|({\rm Re}T)x\|\le \frac{1}{2} (\|Tx\|+\|T^*x\|)< \|x\|
$$
which forces to have $x=0$. Therefore $\n(I-|{\rm Re}T|) \cap \h_2 \subset \n(I-T^*T)$, and since $\h_1 \subset \n(I-|{\rm Re}T|)$ we conclude that $\n(I-T^*T) =\n(I-|{\rm Re}T|)$. But this subspace reduces ${\rm Re}T$ and it is invariant for $T^*$ (as we have seen before). Hence $\n(I-T^*T)$ reduces $T$, which also gives the inclusion $\n(I-TT^*) \subset \n(I-T^*T)$. Finally, we conclude that
$$
\n(I-T^*T) =\n(I-TT^*)=\n(I- |{\rm Re}T|)=\h_1,
$$
where for the last equality we have in view the maximality of the subspace $\n(T) \oplus \h_1$ relative to $T$. The identities \eqref{ec31} are proved.

In addition, from the inclusions \eqref{ec36} we infer that $\n(I-QQ^*)=\{0\}$, so $Q^*$ like $Q$ are pure contractions on $\h_2$.

Now, the condition \eqref{ec11} yields $\n({\rm Re}T)=\n(T)\cap \n(T^*)$, and this subspace reduces $T$. Therefore we have $\n({\rm Re}T)=\n(T)$ if and only if $\n(T)$ reduces $T$, that is $R_0=0$. Equivalently, this means that $T$ has the diagonal representation $T=W_1\oplus 0\oplus Q$ on $\h=\h_1\oplus \h_0 \oplus \h_2$, with $W_1, Q$ as above, that is $T=W_1\oplus Z$ where $Z=0\oplus Q$ is a pure contraction which satisfies the condition \eqref{ec11}.

Finally, for the second relation in \eqref{ec32} we have from the above block matrix form of $T$ (with $W_0=0$, $R_1=0$)
$$
\n(T^*)=\{y\oplus z \in \n(T) \oplus \h_2 : R_0^*y+Q^* z=0\}.
$$
But $\n(T^*) \cap \n(T)=\n(R_0^*)$ and $\n(T^*|_{\h_2})=\n(Q^*)=\n(T^*)\cap \h_2= \n(T^*|_{\overline{\R(T^*)}})$. So we obtain
$$
\n(T^*)=\n(R_0^*)\oplus \n(Q^*) =\n({\rm Re}T)\oplus \n(T^*|_{\overline{\R(T^*)}}),
$$
and this finishes the proof.
\end{proof}

\begin{corollary}\label{co32}
A real scalar multiple of a partial isometry which satisfies the condition \eqref{ec11} is self-adjoint.
\end{corollary}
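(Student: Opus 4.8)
The plan is to reduce at once to a partial isometry and then quote Theorem \ref{te31}. Write the operator as $cV$ with $c\in\er$ and $V$ a partial isometry. If $c=0$ there is nothing to prove, and if $c<0$ I replace $V$ by $-V$, which is again a partial isometry, so I may assume $c>0$. Then $V=c^{-1}(cV)$ still satisfies \eqref{ec11} because that condition is homogeneous (as remarked at the start of Section~3), and $cV=(cV)^{*}$ if and only if $V=V^{*}$; moreover every partial isometry is a contraction. Hence it suffices to show that \emph{a partial isometry $V$ on $\h$ satisfying \eqref{ec11} is self-adjoint}.

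First I would record the elementary fact that, $V$ being a partial isometry, $V^{*}V$ is the orthogonal projection onto the initial space $\n(V)^{\perp}$, so that
\[
\n(I-V^{*}V)=\R(V^{*}V)=\n(V)^{\perp},
\]
and therefore $\h=\n(I-V^{*}V)\oplus\n(V)$ is an orthogonal decomposition (at this stage merely orthogonal, not yet known to be reducing). Equivalently, in the notation of the proof of Theorem \ref{te21}, $\h'=\overline{\R(V^{*}V-(V^{*}V)^{2})}=\{0\}$ since $V^{*}V$ is idempotent, so $\h_{2}=\{0\}$ and the block form \eqref{ec24} degenerates. Next I apply Theorem \ref{te31} to the contraction $V$: it yields that $\n(I-V^{*}V)$ reduces $V$ and that $V$ acts on it as a symmetry. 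Combining this with the displayed decomposition, the orthogonal complement $\n(I-V^{*}V)^{\perp}=\n(V)$ also reduces $V$, on which $V$ is of course $0$. Thus $V=\bigl(V|_{\n(I-V^{*}V)}\bigr)\oplus 0$ is an orthogonal direct sum of a self-adjoint operator and the zero operator, whence $V=V^{*}$, and finally $cV=(cV)^{*}$.

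The only step that is not pure bookkeeping is the passage from ``$\n(I-V^{*}V)$ is reducing'' to ``$\n(V)$ is reducing'': for a \emph{general} partial isometry the subspace $\n(V)$ need not reduce $V$ (witness a rank-one nilpotent on $\C^{2}$), and it is precisely the strength of hypothesis \eqref{ec11}, carried by Theorem \ref{te31}, that forces $\n(I-V^{*}V)$ to be reducing and hence its orthogonal complement $\n(V)$ to be reducing as well. Everything else — the homogeneity reduction and the identification of the two summands as a symmetry and as $0$ — is routine.
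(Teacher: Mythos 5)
Your proposal is correct and follows essentially the same route as the paper: reduce by homogeneity of \eqref{ec11} to a partial isometry $V$, observe that $\h=\n(I-V^*V)\oplus\n(V)$ because $V^*V$ is a projection, and invoke Theorem \ref{te31} to get that $\n(I-V^*V)$ reduces $V$ to a symmetry, so $V=U\oplus 0=V^*$. The extra detail you supply about $\n(V)$ being reducing is exactly what the paper's one-line argument implicitly uses.
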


\begin{proof}
If $T$ is a partial isometry satisfying \eqref{ec11} then $\h=\n(I-T^*T) \oplus \n(T)$, so $T=U\oplus 0=T^*$ by Theorem \ref{te31}. More general, if $T=\alpha T_0$ with $\alpha \in \er$ and $T_0$ a partial isometry, then $\frac{1}{\alpha}T=\frac{1}{\alpha}T^*$ by the previous remark, so $T=T^*$.
\end{proof}

A more general result than the previous corollary is the following

\begin{proposition}\label{pr33}
Let $0 \neq T \in \B$ having with respect to a decomposition $\h=\mathcal{G} \oplus \mathcal{G}^{\perp}$ the block matrix form
\begin{align}\label{ec37}
T=
\begin{pmatrix}
S & R\\
0 & Q
\end{pmatrix},
\end{align}
where $\frac{1}{\|T\|}S$ is an isometry on $\mathcal{G}$ and, in addition, either $R\in \mathcal{B}(\mathcal{G}^{\perp}, \mathcal{G})$ is injective and $Q\in \mathcal{B}(\mathcal{G}^{\perp})$ is arbitrary, or $Q^2=0$. If $T$ satisfies the condition \eqref{ec11} then $T$ is self-adjoint.
\end{proposition}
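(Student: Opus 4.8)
The plan is to squeeze $T$ between its hypotheses and Theorem \ref{te31}. Since replacing $T$ by $T/\|T\|$ changes neither the condition \eqref{ec11}, nor the form \eqref{ec37} (it is still $\frac{1}{\|T\|}S$ that occupies the $(1,1)$-corner), nor the conclusion $T=T^*$, by the remark opening this section we may assume $\|T\|=1$; then $S$ is a genuine isometry on $\mathcal{G}$, so $S^*S=I_{\mathcal{G}}$, and $T$ is a contraction. From
$$
T^*T=
\begin{pmatrix}
I_{\mathcal{G}} & S^*R\\
R^*S & R^*R+Q^*Q
\end{pmatrix}\le I
$$
and the elementary fact that a positive block operator $\bigl(\begin{smallmatrix}0 & B\\ B^* & D\end{smallmatrix}\bigr)\ge 0$ forces $B=0$, applied to $I-T^*T$ (whose $(1,1)$-entry vanishes), I would read off $S^*R=0$, hence also $R^*S=0$. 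Then $T^*Tx=(S^*Sx)\oplus(R^*Sx)=x$ for every $x\in\mathcal{G}$, i.e. $\mathcal{G}\subset\n(I-T^*T)$.

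Next I would invoke Theorem \ref{te31}: $T$ being a contraction satisfying \eqref{ec11}, the subspace $\n(I-T^*T)$ reduces $T$ to a symmetry. Now $\mathcal{G}$ is invariant for $T$ and contained in $\n(I-T^*T)$, hence invariant for the self-adjoint operator $T|_{\n(I-T^*T)}$; but an invariant subspace $\mathcal{L}$ of a self-adjoint operator $A$ is automatically reducing, since $A\mathcal{L}\subset\mathcal{L}$ gives $\langle Az,y\rangle=\langle z,Ay\rangle=0$ for $z\in\mathcal{L}^{\perp}$, $y\in\mathcal{L}$, so $A\mathcal{L}^{\perp}\subset\mathcal{L}^{\perp}$. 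Thus $\mathcal{G}$ reduces $T|_{\n(I-T^*T)}$, and therefore reduces $T$; in the block form \eqref{ec37} this says exactly $R=0$. Hence $T=S\oplus Q$ on $\h=\mathcal{G}\oplus\mathcal{G}^{\perp}$. Since $\mathcal{G}$ and $\mathcal{G}^{\perp}$ now both reduce $T$, one has $|T|=|S|\oplus|Q|$ and $|{\rm Re}T|=|{\rm Re}S|\oplus|{\rm Re}Q|$, so both $S$ and $Q=T|_{\mathcal{G}^{\perp}}$ are contractions satisfying \eqref{ec11}; in particular $S$, an isometry obeying \eqref{ec11}, is a symmetry by Corollary \ref{co32}.

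It then remains to deduce $Q=0$, which is where the two extra hypotheses enter. If $R$ is injective, then $R=0$ forces $\mathcal{G}^{\perp}=\n(R)=\{0\}$, so $T=S=S^*$. If instead $Q^2=0$, then $\R(Q)\subset\n(Q)$ and $Q$ vanishes on $\n(Q)$, so with respect to $\mathcal{G}^{\perp}=\overline{\R(Q^*)}\oplus\n(Q)$ we have $Q=\bigl(\begin{smallmatrix}0 & 0\\ Q_0 & 0\end{smallmatrix}\bigr)$ with $Q_0:=Q|_{\overline{\R(Q^*)}}$. Using $Q^2=Q^{*2}=0$,
$$
|Q|^2=Q^*Q=
\begin{pmatrix}
Q_0^*Q_0 & 0\\ 0 & 0
\end{pmatrix},\qquad
|{\rm Re}Q|^2=({\rm Re}Q)^2=\frac14\,(Q^*Q+QQ^*)=\frac14
\begin{pmatrix}
Q_0^*Q_0 & 0\\ 0 & Q_0Q_0^*
\end{pmatrix},
$$
so, taking positive square roots, $|Q|=|Q_0|\oplus 0$ and $|{\rm Re}Q|=\frac12\bigl(|Q_0|\oplus|Q_0^*|\bigr)$. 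Restricting $|Q|\le|{\rm Re}Q|$ to vectors of $\overline{\R(Q^*)}$ yields $|Q_0|\le\frac12|Q_0|$, hence $|Q_0|=0$, i.e. $Q_0=0$ and $Q=0$. In all cases $T=S$ is a symmetry, so $T=T^*$.

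The step I expect to carry the weight is getting $R=0$: its mechanism is that Theorem \ref{te31} already identifies the isometric part of a contraction obeying \eqref{ec11} as a reducing piece on which $T$ is symmetric, so the prescribed isometry $S$ is forced to sit inside it; $\mathcal{G}$, being invariant for the self-adjoint operator $T|_{\n(I-T^*T)}$, is then reducing for it, hence for $T$, which annihilates $R$. Once $R=0$ the two alternatives cost almost nothing: injectivity of $R$ collapses $\mathcal{G}^{\perp}$ to $\{0\}$, while $Q^2=0$ reduces to the $2\times2$ computation above. The minor points to verify carefully are that the hypotheses of Theorem \ref{te31} genuinely hold (we do have a contraction, and $\mathcal{G}\subset\n(I-T^*T)$), and that restricting to the reducing subspace $\mathcal{G}^{\perp}$ preserves the condition \eqref{ec11}.
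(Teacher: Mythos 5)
Your proof is correct and ends in the same place, but the way you kill $R$ is genuinely different from the paper's. Both arguments normalize so that $T$ is a contraction and $S$ an isometry, both read off $S^*R=0$ (hence $\mathcal{G}\subset \n(I-T^*T)$) from the vanishing $(1,1)$-corner of $I-T^*T\ge 0$, and both finish the nilpotent case with essentially the same $2\times 2$ computation of $|Q|$ versus $|{\rm Re}Q|$. The paper, however, uses only the kernel identity $\n(I-T_0^*T_0)=\n(I-T_0T_0^*)$ from Theorem \ref{te31}: from $\mathcal{G}\subset\n(I-T_0T_0^*)$ it extracts the relations $I-S_0S_0^*-R_0R_0^*=0$ and $Q_0R_0^*=0$ (so $R_0$ is a partial isometry onto $\n(S_0^*)$), and then treats the two alternatives separately --- injectivity of $R$ first forces $Q_0=0$ and then $\mathcal{G}^{\perp}=\{0\}$ by comparing the two kernels, while $Q^2=0$ first gives $\n(I-Q_0Q_0^*)=\{0\}$ and only afterwards $R_0=0$. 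You instead invoke the stronger clause of Theorem \ref{te31} that $\n(I-T^*T)$ \emph{reduces $T$ to a symmetry}: since $\mathcal{G}$ is a $T$-invariant subspace sitting inside that self-adjoint part, it automatically reduces it, hence reduces $T$, so $R=0$ uniformly before the two hypotheses are even used. This is cleaner and unifies the cases; what it buys is the unconditional statement that \emph{any} contraction of the form \eqref{ec37} with $S$ an isometry and satisfying \eqref{ec11} has $R=0$ and $S$ a symmetry. Be aware that this stronger conclusion sits in tension with Remark \ref{re37} of the paper, which asserts that ``in general $R\neq 0$'' in exactly this situation; your argument (granting Theorem \ref{te31}) shows that assertion cannot stand, so it is worth double-checking and, if you keep your route, flagging the discrepancy explicitly.
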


\begin{proof}
Let $T \neq 0$ as in \eqref{ec37}, and let $T_0 =\frac{1}{\alpha}T$, $S_0=\frac{1}{\alpha}S$, $R_0=\frac{1}{\alpha}R$, $Q_0=\frac{1}{\alpha}Q$, where $\alpha =\|T\|$. Then the contraction $T_0$ satisfies \eqref{ec11}, and by Theorem \ref{te31} we infer $\n(I-T_0^*T_0)=\n(I-T_0T_0^*)$. To use this fact we have in view the representations
$$
I-T_0^*T_0=
\begin{pmatrix}
0 & 0\\
0 & I-R_0^*R_0-Q_0^*Q_0
\end{pmatrix},
\quad I-T_0T_0^*=
\begin{pmatrix}
I- S_0S_0^*-R_0R_0^* & -R_0Q_0^*\\
-Q_0R_0^* & I-Q_0Q_0^*
\end{pmatrix},
$$
where for $I-T_0^*T_0$ we used that $S_0$ is an isometry on $\mathcal{G}$. These representations together with the above kernels give that $\mathcal{G} \subset \n(I-T_0T_0^*)$, that is the relations
\begin{align}\label{ec38}
I-S_0S_0^*-R_0R_0^*=0, \quad Q_0R_0^*=0.
\end{align}

From the first relation we have $R_0R_0^*=P_{\n(S_0^*)}$, so $R_0$ is a partial isometry and $\R(R_0)=\n(S_0^*)$.

Now, if $R$ is injective, then from the second relation in \eqref{ec38} one obtains $Q_0=0$. In this case $R_0$ is an isometry on $\mathcal{G}^{\perp}$, and since by the above matrix representations we get
$$
\mathcal{G}=\n(I-T_0T_0^*)=\n(I-T_0^*T_0)=\mathcal{G}\oplus \n(I-R_0^*R_0)=\mathcal{G}\oplus \mathcal{G}^{\perp},
$$
it follows $\mathcal{G}^{\perp}=\{0\}$. Hence $T_0=S_0$ is a symmetry on $\h$, and consequently $T=T^*$.

Assume next the other condition from hypothesis, namely for $Q^2=0$. Since $\n(I-T_0^*T_0)=\mathcal{G}\oplus \n(I-Q_0Q_0^*)$ is invariant for $T_0^*$, $\n(I-Q_0Q_0^*)$ will be invariant for $Q_0^*=T_0^*|_{\mathcal{G}^{\perp}}$. So, if $b\in \n(I-Q_0Q_0^*)$ we have $Q_0^*b=Q_0Q_0^{*2}b=0$, which means $b=Q_0Q_0^*b\in \n(Q_0^*)$ that is $b=0$. Hence $\n(I-Q_0Q_0^*)=\{0\}$ which gives $\n(I-T_0T_0^*)=\mathcal{G}$ and this subspace reduces $T_0$. Thus $R_0=0$, while $S_0$ and $Q_0$ satisfy the condition \eqref{ec11}. Therefore $S_0$ will be a symmetry on $\mathcal{G}$, and as $Q_0^2=0$ it is easy to see that $Q=0$. Indeed, since $Q^2=0$, $Q$ will have on $\mathcal{G}^{\perp}=\overline{\R(Q)}\oplus \n(Q^*)$ the block matrix form
$$
Q=
\begin{pmatrix}
0 & Q_1\\
0 & 0
\end{pmatrix}.
$$

By a simple computation we get
$$
|Q|=0\oplus |Q_1|, \quad |{\rm Re}Q|=\frac{1}{2} (|Q_1^*|\oplus |Q_1|)
$$
on the above decomposition of $\mathcal{G}^{\perp}$. So, the condition \eqref{ec11} for $Q$ implies $Q_1=0$ that is $Q=0$.

We conclude that $T=S \oplus 0=\alpha S_0\oplus 0=T^*$, $S_0$ being a symmetry. This ends the proof.
\end{proof}

To apply this proposition for $2$-quasi-isometries, we recall (see [8, Remark 3.10]; or [15, Remark 2.7]) that such an operator has a block matrix form as in \eqref{ec37} on $\h=\overline{\R(T^2)}\oplus \n(T^{*2})$, with $S$ an isometry and $Q^2=0$. So the previous proposition gives the following

\begin{corollary}\label{co34}
A contractive $2$-quasi-isometry which satisfies the condition \eqref{ec11} is self-adjoint.
\end{corollary}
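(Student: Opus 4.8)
The plan is to deduce this immediately from Proposition \ref{pr33}. First I would recall the structural fact quoted just above the corollary: a $2$-quasi-isometry $T$ has, with respect to the orthogonal decomposition $\h=\overline{\R(T^2)}\oplus\n(T^{*2})$, a block matrix form as in \eqref{ec37} in which $S=T|_{\overline{\R(T^2)}}$ is an isometry on $\overline{\R(T^2)}$ and $Q^2=0$ on $\n(T^{*2})$ (see [8, Remark 3.10] or [15, Remark 2.7]). This is precisely the shape \eqref{ec37} in the ``$Q^2=0$'' alternative of Proposition \ref{pr33} (with $R$ left arbitrary), so once I check that $T$ also meets the normalization hypothesis of that proposition I am done.

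Next I would dispose of the degenerate cases and of the normalization. If $T=0$ there is nothing to prove. If $T\neq 0$ and $\overline{\R(T^2)}\neq\{0\}$, then $S$ is a nonzero isometry, whence $\|T\|\ge\|S\|=1$; since $T$ is a contraction this forces $\|T\|=1$, so $\frac{1}{\|T\|}S=S$ is an isometry on $\mathcal{G}:=\overline{\R(T^2)}$ and Proposition \ref{pr33} applies verbatim, giving $T=T^*$. If instead $T\neq 0$ but $T^2=0$, then $\mathcal{G}=\overline{\R(T^2)}=\{0\}$, so $\mathcal{G}^{\perp}=\h$ and $T=Q$ with $Q^2=0$; this is the degenerate instance $\mathcal{G}=\{0\}$ of Proposition \ref{pr33}, for which the argument of its proof still goes through (the isometry condition on $S$ being vacuous) and in fact yields $T=0$, consistently showing that no nonzero such $T$ exists. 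Alternatively, one can in this case simply invoke directly the last part of the proof of Proposition \ref{pr33}: writing $T=\left(\begin{smallmatrix}0 & T_1\\0 & 0\end{smallmatrix}\right)$ on $\h=\overline{\R(T)}\oplus\n(T^*)$, one has $|T|=0\oplus|T_1|$ and $|{\rm Re}T|=\frac{1}{2}(|T_1^*|\oplus|T_1|)$, so \eqref{ec11} forces $|T_1|=0$, i.e. $T=0=T^*$.

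In every case $T$ is self-adjoint, which is the assertion. I do not expect any genuine obstacle here: the only work is (a) citing the correct block structure for contractive $2$-quasi-isometries and (b) the routine normalization/degenerate-case bookkeeping above; the substantive content has already been carried by Proposition \ref{pr33}, and through it by Theorem \ref{te31}. The corollary is essentially a repackaging of Proposition \ref{pr33} for this particular operator class.
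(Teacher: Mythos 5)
Your proposal is correct and follows essentially the same route as the paper: cite the block structure of a $2$-quasi-isometry on $\h=\overline{\R(T^2)}\oplus\n(T^{*2})$ from [8, Remark 3.10] (or [15, Remark 2.7]) and apply Proposition \ref{pr33} in its $Q^2=0$ alternative. Your extra care with the normalization $\|T\|=1$ and the degenerate case $T^2=0$ is sound bookkeeping that the paper leaves implicit.
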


It is still unknown if the second assumption in the hypothesis of Proposition \ref{pr33}, namely $Q^2=0$, can be replaced by the weaker condition $Q^m=0$ for some $m\ge 3$, in order to preserve the conclusion; that is to prove that $Q=0$ under the condition \eqref{ec11}.

However, for $m$-quasi-isometries we have the following

\begin{corollary}\label{co35}
If $T\in \B$ is a contractive $m$-quasi-isometry for an integer $m\ge 3$ which satisfies the condition \eqref{ec11} then $\n(I-T^*T)=\n(I-TT^*)=\overline{\R(T^m)}$ reduces $T$ to a symmetry. In addition, $T$ is self-adjoint if and only if $T=S\oplus 0$.
\end{corollary}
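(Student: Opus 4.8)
The plan is to bypass Proposition~\ref{pr33}: its step ``$Q=0$'' genuinely uses $Q^2=0$, which for $Q^m=0$, $m\ge 3$, is the open problem recorded right after it; here we only need the weaker structural conclusion, and this reads off directly from Theorem~\ref{te31} combined with the definition of an $m$-quasi-isometry. Put $\el:=\n(I-T^*T)$. Since $T$ is a contraction obeying \eqref{ec11}, Theorem~\ref{te31} gives $\el=\n(I-TT^*)=\n(I-|{\rm Re}T|)$ and that $\el$ reduces $T$ to a symmetry; write $S:=T|_{\el}$, so $S=S^*=S^{-1}$ on $\el$.

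The one substantive step is the identity $\el=\overline{\R(T^m)}$. For $\overline{\R(T^m)}\subset\el$ I would use that $T$ is isometric on $\R(T^m)$: if $x=T^my$ then $\|Tx\|=\|x\|$, so $\langle(I-T^*T)x,x\rangle=0$, whence $(I-T^*T)x=0$ by positivity of $I-T^*T$; as $\el$ is closed, $\overline{\R(T^m)}\subset\el$. For the reverse inclusion, take $x\in\el$; since $\el$ reduces $T$ we have $T^m|_{\el}=S^m$ and $S^mx\in\el$, so $x=S^{2m}x=T^m(S^mx)\in\R(T^m)$. Hence $\el=\overline{\R(T^m)}=\n(I-TT^*)$ reduces $T$ to the symmetry $S$, which is the first assertion.

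For the ``in addition'' part, $\el^{\perp}=\overline{\R(T^m)}^{\perp}=\n(T^{*m})$ also reduces $T$; set $Q:=T|_{\el^{\perp}}$, so $T=S\oplus Q$. As the decomposition reduces $T$ and $T^{*m}$ kills $\n(T^{*m})$, we get $Q^{*m}=(T^{*m})|_{\el^{\perp}}=0$, i.e. $Q^m=0$. Therefore $T=T^*$ amounts to $Q=Q^*$, and since a self-adjoint nilpotent operator is zero, $T=T^*$ holds exactly when $Q=0$, i.e. when $T=S\oplus0$; the converse $T=S\oplus0\Rightarrow T=T^*$ is immediate.

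The only pitfall is the temptation to push for $Q=0$ in general — that is precisely the unresolved question after Proposition~\ref{pr33}, and it is not required: the corollary asserts only that the isometric part of $T$ equals $\overline{\R(T^m)}$ and that self-adjointness is equivalent to the vanishing of the nilpotent summand. (An alternative route paralleling Proposition~\ref{pr33} would start from the block form $T=\begin{pmatrix}S&R\\0&Q\end{pmatrix}$ on $\overline{\R(T^m)}\oplus\n(T^{*m})$ with $S$ an isometry and $Q^m=0$, extract from $\n(I-T^*T)=\n(I-TT^*)$ the relations $SS^*+RR^*=I$ and $QR^*=0$, note that $\n(I-QQ^*)$ is then invariant for $Q^*$ with $Q^*$ isometric on it, and conclude $\n(I-QQ^*)=\{0\}$ from $Q^{*m}=0$; the direct argument above is shorter.)
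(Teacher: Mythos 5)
Your proof is correct, and it reaches the key identity by a genuinely different, more direct route than the paper's. The paper works inside the block form \eqref{ec37} on $\h=\overline{\R(T^m)}\oplus\n(T^{*m})$ (with $S$ an isometry and $Q^m=0$), uses the equality $\n(I-T^*T)=\n(I-TT^*)$ from Theorem \ref{te31} to get $S^*R=0$ and $\n(I-T^*T)=\overline{\R(T^m)}\oplus\n(I-QQ^*)$, and then kills $\n(I-QQ^*)$ by the recurrence $b=QQ^*b=\dots=Q^mQ^{*m}b=0$ before concluding $R=0$ --- this is precisely the alternative you sketch in your closing parenthesis. You instead prove $\n(I-T^*T)=\overline{\R(T^m)}$ by two bare-hands inclusions: $\R(T^m)\subset\n(I-T^*T)$ from the isometric action of $T$ on $\R(T^m)$ together with positivity of $I-T^*T$ (this is where contractivity enters), and $\n(I-T^*T)\subset\R(T^m)$ from the surjectivity of the symmetry $T|_{\n(I-T^*T)}$ via $x=T^m(S^mx)$. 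This buys a shorter argument that never needs the upper-triangular form or the nilpotency of the compression for the first assertion, it shows in passing that $\R(T^m)$ is already closed and equals $\n(I-T^*T)$, and it makes transparent that the structural conclusion holds for every $m\ge 1$ (the hypothesis $m\ge 3$ only reflects that for $m\le 2$ the stronger conclusion $T=T^*$ is available from Corollaries \ref{co32} and \ref{co34}). Your handling of the final equivalence --- $Q^{*m}=T^{*m}|_{\n(T^{*m})}=0$ and a self-adjoint nilpotent operator vanishes --- agrees in substance with the paper's.
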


\begin{proof}
If $T$ is a $m$-quasi-isometry then $T|_{\R(T^m)}$ is an isometry. Such an operator has the form \eqref{ec37} on $\h= \overline{\R(T^m)} \oplus \n(T^{*m})$ with $S$ an isometry and $Q^m=0$. In the case when $\|T\|=1$ we also have (as in the proof of Proposition \ref{pr33}) that $S^*R=0$ and
  $$
  \n(I-T^*T)=\n(I-TT^*)= \overline{\R(T^m)} \oplus \n(I-QQ^*),
  $$
this subspace reducing $T$ to a symmetry. So, if $b\in \n(I-QQ^*)$ and $m\ge 3$ then $Q^*b \in \n(I-QQ^*)$ which leads (by recurrence) to $b=QQ^*b=Q^mQ^{*m}b=0$. Hence $\n(I-QQ^*)=\{0\}$, $\n(I-T^*T)=\overline{\R(T^m)}$, and $S$ is a symmetry on this subspace. Then the condition $S^*R=0$ yields $R=0$, consequently $T=S\oplus Q$.

Assume now $T=T^*$ that is $Q=Q^*$. Since $Q^m=0$ one has $\R(Q^{m-1})\subset \n(Q)=\n(Q^*) \subset \n(Q^{*(m-1)})$, hence $Q^{m-1}=0$. By recurrence one infers $Q=0$, so $T=S \oplus 0$. The converse implication for the second assertion of corollary being trivial, the proof is finished.
\end{proof}

We remarked before that a non-null nilpotent operator of order $2$ cannot satisfy the condition \eqref{ec11}. We can also use this fact to obtain the following result

\begin{proposition}\label{pr36}
Let $T$ be a contraction on $\h$ having with respect to the decomposition $\h=\mathcal{G} \oplus \mathcal{G}^{\perp}$ the block matrix form
\begin{align}\label{ec39}
T=
\begin{pmatrix}
W & R\\
0 & W'
\end{pmatrix}
\end{align}
where $W$ and $W'$ are partial isometries on $\mathcal{G}$ and $\mathcal{G}^{\perp}$, respectively, while $R\in \mathcal{B}(\mathcal{G}^{\perp}, \mathcal{G})$. If $T$ satisfies the condition \eqref{ec11} then $T$ is self-adjoint.
\end{proposition}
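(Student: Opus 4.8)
The plan is to show that the ``off-diagonal'' part $Z$ of $T$ is a nilpotent operator of order $\le 2$ satisfying \eqref{ec11}, and then to invoke the fact recalled just before this proposition (and established inside the proof of Proposition \ref{pr33}) that a non-null nilpotent of order $2$ cannot satisfy \eqref{ec11}. This forces $Z=0$, so $T$ coincides with its symmetric part and $T=T^*$.

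First I would extract from $\|T\|\le1$ alone the relations $W^*R=0$ and $RW'^*=0$, exactly as in the proof of Theorem \ref{te21}. The $(1,1)$-block of $I-TT^*\ge0$ reads $I_{\mathcal{G}}-WW^*-RR^*\ge0$, and since $W$ is a partial isometry $WW^*=P_{\R(W)}$; hence $RR^*\le P_{\n(W^*)}$, which forces $\R(R)\subset\n(W^*)$, i.e. $W^*R=0$. Symmetrically, the $(2,2)$-block of $I-T^*T\ge0$ gives $R^*R\le I_{\mathcal{G}^{\perp}}-W'^*W'=P_{\n(W')}$, so $\R(R^*)\subset\n(W')$ (equivalently $RW'^*=0$) and $\n(I-R^*R)\subset\n(W')$. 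Using $W^*R=0$, the operator $T^*T=W^*W\oplus(R^*R+W'^*W')$ is block-diagonal on $\h=\mathcal{G}\oplus\mathcal{G}^{\perp}$.

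Now I would apply Theorem \ref{te31}: $\mathcal{N}:=\n(I-T^*T)$ reduces $T$, with $T|_{\mathcal{N}}$ a symmetry, so $T=S\oplus Z$ on $\mathcal{N}\oplus\mathcal{N}^{\perp}$ with $S$ self-adjoint, $Z=T|_{\mathcal{N}^{\perp}}$ a contraction, and (since $\mathcal{N}$ reduces $T$) $Z$ again satisfies \eqref{ec11}. Because $T^*T$ is block-diagonal, so is $\mathcal{N}$; using $RW'^*=0$ (which makes $R^*R$ vanish on $\n(W')^{\perp}$ and keeps its range in $\n(W')$), the identity $W'^*W'=P_{\n(W')^{\perp}}$, and $\n(I-R^*R)\subset\n(W')$, one computes
$$
\mathcal{N}=\n(W)^{\perp}\oplus\bigl(\n(W')^{\perp}\oplus\n(I-R^*R)\bigr),\qquad \mathcal{N}^{\perp}=\n(W)\oplus\m,
$$
where $\m:=\n(W')\ominus\n(I-R^*R)\subset\n(W')\subset\mathcal{G}^{\perp}$ and $\n(W)\subset\mathcal{G}$; in particular $\mathcal{N}^{\perp}\cap\mathcal{G}=\n(W)$.

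It remains to see $Z^2=0$. Since $\n(W)\subset\n(T)$, $Z$ vanishes on $\n(W)$. If $v\in\m$ then $W'v=0$, so $Tv=Rv\in\mathcal{G}$; but $Tv\in\mathcal{N}^{\perp}$ because $\mathcal{N}^{\perp}$ reduces $T$, whence $Rv=Tv\in\mathcal{N}^{\perp}\cap\mathcal{G}=\n(W)$. Thus $Z$ sends $\m$ into $\n(W)$, on which it is zero, so $Z^2=0$ on $\mathcal{N}^{\perp}=\n(W)\oplus\m$; the recalled fact about order-$2$ nilpotents then gives $Z=0$, hence $T=S\oplus0=T^*$. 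I expect the only genuinely new point to be the identification $\mathcal{N}^{\perp}=\n(W)\oplus\m$ with $\m\subset\n(W')$, i.e. locating the reducing subspace furnished by Theorem \ref{te31} inside the decomposition $\mathcal{G}\oplus\mathcal{G}^{\perp}$; once this is in hand the nilpotency $Z^2=0$ is immediate. A minor point requiring care is that $W^*R=0$ and $RW'^*=0$ must be derived from contractivity before Theorem \ref{te31} is invoked, since $T^*$ need not satisfy \eqref{ec11}.
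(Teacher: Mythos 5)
Your argument is correct. It shares the paper's overall strategy --- split off the symmetric part of $T$ on $\n(I-T^*T)$ via Theorem \ref{te31} and show that what remains is a square-zero nilpotent satisfying \eqref{ec11}, hence zero --- but the execution is genuinely different. The paper first treats the case $W'=0$: it invokes the maximality of $\m=\n(T)\oplus\n(I-T^*T)$ from Theorem \ref{te21} to get $\mathcal{G}\subset\m$ and $\m^{\perp}\subset\n(T^*)$, and reads the square-zero operator $\widetilde{R}$ off the refined $3\times 3$ block form constructed inside the proof of Theorem \ref{te31}; the general case is then reduced to $W'=0$ by showing $\m^{\perp}\subset\n(T^*)$ using $\n(I-T^*T)=\n(I-TT^*)$. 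You instead work entirely inside the given decomposition $\mathcal{G}\oplus\mathcal{G}^{\perp}$: after extracting $W^*R=0$ and $RW'^{*}=0$ from contractivity alone (the same computation as in Theorem \ref{te21}, and you are right that this must precede any appeal to Theorem \ref{te31}), you compute $\n(I-T^*T)=\n(W)^{\perp}\oplus\n(W')^{\perp}\oplus\n(I-R^*R)$ explicitly and verify by a direct range chase that $T$ maps $\n(W')\ominus\n(I-R^*R)$ into $\n(W)$ and kills $\n(W)$, so the restriction $Z$ of $T$ to $\n(I-T^*T)^{\perp}$ satisfies $Z^2=0$. This buys a self-adjoint-free, case-split-free treatment that never uses the maximality statement of Theorem \ref{te21} or the internal $3\times 3$ structure from Theorem \ref{te31}'s proof, at the cost of the explicit kernel computation; the paper's route is shorter once its structural machinery is taken for granted. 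The individual steps all check out, including the inclusion $\n(I-R^*R)\subset\n(W')$ that legitimizes your orthogonal decomposition of $\n(I-T^*T)$, and the observation that a reducing subspace passes condition \eqref{ec11} to the restriction.
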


\begin{proof}
 Consider firstly that $W'=0$ in \eqref{ec39}. Then $\overline{\R(T)} \subset \mathcal{G}$ and, assuming \eqref{ec11}, by Theorem \ref{te21} and Theorem \ref{te31} one has $\mathcal{G} \subset \n(T) \oplus \n(I-T^*T)=:\m$. Therefore $\m^{\perp}\subset \mathcal{G}^{\perp}\subset \n(T^*)$ that is $T^*|_{\m^{\perp}}=0$, and using the $3 \times 3$ block matrix of $T$ on $\h=\n(T) \oplus \n(I-T^*T) \oplus \m^{\perp}$ given by the proof of Theorem \ref{te31} (with $W_0=0$, $R_1=0$, $Q=0$ by the previous remark, $W_1$ a symmetry on $\n(I-T^*T)$, and $R_0 \in \mathcal{B}(\m^{\perp}, \n(T))$) we get the representation $T=W_1 \oplus \widetilde{R}$ on $\h=\n(I-T^*T) \oplus [\n(T)\oplus \m^{\perp}]$, where $\widetilde{R} \in \mathcal{B}(\n(T) \oplus \m^{\perp})$ has the block matrix form
$$
\widetilde{R}=
\begin{pmatrix}
0 & R_0\\
0 & 0
\end{pmatrix}.
$$
Since $\widetilde{R}^2=0$ and $\widetilde{R}$ satisfies \eqref{ec11} one has $\widetilde{R}=0$, which ensures $T=T^*$.

In the general case, as $W'$ is a partial isometry on $\mathcal{G}^{\perp}$ and $\n(I-T^*T)=\n(I-TT^*)$ by \eqref{ec31}, we have $\mathcal{G}^{\perp}\subset \n(I-T^*T)\oplus \n(T^*)$, and as above $\mathcal{G} \subset \m$. So, $\m^{\perp}\subset \mathcal{G}^{\perp}$ which by the previous inclusion of $\mathcal{G}^{\perp}$ gives $\m^{\perp} \subset \n(T^*)$. This shows that $T$ has on $\h=\m \oplus \m^{\perp}$ a block matrix of the form
$$
T=
\begin{pmatrix}
\widetilde{W} & \widetilde{R}\\
0 & 0
\end{pmatrix},
$$
with $\widetilde{W}=0\oplus W_1$ a partial isometry and then it follows that $T=T^*$ by the previous conclusion.
\end{proof}

The contractions mentioned in this proposition are not hyponormal, in general. But among these one gets the quasi-isometries (the case when $W$ is an isometry), which are subnormal (as we already quoted in the introduction).

\begin{remark}\label{re37}
\rm
An operator $T\in \B$ having the block matrix form \eqref{ec37} as in Proposition \ref{pr33} is not a contraction, in general. But in the case when $T$ is a contraction with $S$ an isometry and $R$, $Q$ arbitrary contractions (in \eqref{ec37}), then the condition $S^*R=0$ is also true and $R$ will be a partial isometry (as in the proof of Proposition \ref{pr33}), if $T$ satisfies \eqref{ec11}. In general $R\neq 0$, but if $Q$ is pure then $R=0$ because $\n(I-T^*T)$ reduces $T$ to a symmetry and we have
$$
\n(I-T^*T)=\n(I-TT^*)=\mathcal{G} \oplus \n(I-R^*R-Q^*Q)=\mathcal{G} \oplus \n(I-QQ^*).
$$
This latter case occurs, for instance, when $\mathcal{G}=\n(I-S_T)$ in the representation \eqref{ec37} of $T$, and then the Fong-Tsui conjecture for $T$ one reduces to its pure part $Q$.

In fact, every operator $T\neq 0$ on $\h$ has the form \eqref{ec37} with $R,Q$ arbitrary operators, on the decomposition $\h=\n(I-S_{T_0})\oplus \overline{\R(I-S_{T_0})}$, where $T_0=\frac{1}{\|T\|}T$, $S_{T_0}$ is the asymptotic limit of $T_0$, and so $\frac{1}{\|T\|}S$ is an isometry (in \eqref{ec37}). Thus, the assumptions on the operators $R$ or $Q$ in Proposition \ref{pr33} and the condition \eqref{ec11} force such operator $T$ to be self-adjoint.

In turn to Proposition \ref{pr36}, it is clear that in \eqref{ec39} one can consider multiples of partial isometries with the scalar $\alpha = \|T\|$ instead of $W$ and $W'$ respectively, in order to preserve the conclusion.

Another special class of non-contractive operators which contains some $2$-isometries as well as the Brownian isometries, and for which the Fong-Tsui conjecture holds, is mentioned by the following
\end{remark}

\begin{proposition}\label{pr38}
Let $T\in \B$ such that $T^*T\ge I$ and having the block matrix form \eqref{ec37} on $\h=\mathcal{G} \oplus \mathcal{G}^{\perp}$, where $S$ is an isometry on $\mathcal{G}$, $R\in \mathcal{B}(\mathcal{G}^{\perp}, \mathcal{G})$ with $S^*R=0$, while $Q$ is a contraction on $\mathcal{G}^{\perp}$. If $T$ satisfies the condition \eqref{ec11} then $T$ is self-adjoint, in fact a symmetry.
\end{proposition}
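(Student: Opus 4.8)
The plan is to establish first that $\|T\|=1$ — this is where all the difficulty lies — and then to conclude quickly that $T$ is a self-adjoint unitary. I would begin with two elementary observations. Writing $\alpha:=\|T\|$, the condition $\eqref{ec11}$ forces $\|T\|=\||T|\|\le\||\Re T|\|=\|\Re T\|$, while always $\|\Re T\|\le\|T\|$, so $\alpha=\|T\|=\|\Re T\|$. Moreover, since $S$ is an isometry and $S^*R=0$ (hence $R^*S=0$ and $\R(R)\subset\n(S^*)=\R(S)^{\perp}$ inside $\mathcal{G}$), a direct computation gives $T^*T=I_{\mathcal{G}}\oplus(R^*R+Q^*Q)$, whence $\|R^*R+Q^*Q\|\le\|T^*T\|=\alpha^2$; and because $Sg\perp Rh$ for every $x=g\oplus h\in\mathcal{G}\oplus\mathcal{G}^{\perp}$, one obtains the estimate
\[
\|Tx\|^2=\|g\|^2+\langle(R^*R+Q^*Q)h,h\rangle\le\|g\|^2+\alpha^2\|h\|^2 .
\]

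To prove $\alpha=1$ I would argue by contradiction. Assume $\alpha>1$ and choose unit vectors $x_n=g_n\oplus h_n$ with $\|(\Re T)x_n\|\to\alpha$. From $\|(\Re T)x_n\|\le\tfrac12(\|Tx_n\|+\|T^*x_n\|)\le\alpha$, and the fact that the arithmetic mean of two numbers $\le\alpha$ tends to $\alpha$ only if each of them does, both $\|Tx_n\|\to\alpha$ and $\|T^*x_n\|\to\alpha$. The displayed inequality then gives $(\alpha^2-1)\|g_n\|^2\le\alpha^2-\|Tx_n\|^2\to0$, so $\|g_n\|\to0$ and $\|h_n\|\to1$. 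On the other hand $T^*x_n=S^*g_n\oplus(R^*g_n+Q^*h_n)$, and using $\|S^*\|=1$, $\|R^*\|=\|R\|\le\|T\|=\alpha$ and $\|Q^*\|\le1$ we find $\|T^*x_n\|^2\le\|g_n\|^2+(\alpha\|g_n\|+\|h_n\|)^2\to1$, contradicting $\|T^*x_n\|\to\alpha>1$. Hence $\alpha=1$.

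With $\alpha=1$, $T$ becomes a contraction with $T^*T\ge I$, so $T^*T=I$, i.e.\ $T$ is an isometry; and $|T|=I\le|\Re T|$ together with $\|\Re T\|\le1$ forces $(\Re T)^2=I$. Expanding $4(\Re T)^2=T^2+TT^*+T^*T+T^{*2}$ and using $T^*T=I$ yields $T^2+TT^*+T^{*2}=3I$; since $TT^*\le I$ and $\|T^2+T^{*2}\|\le2$, this is only possible if $TT^*=I$ (so $T$ is unitary) and $T^2+T^{*2}=2I$, whence $(T-T^*)^2=T^2+T^{*2}-TT^*-T^*T=0$; as $T-T^*$ is skew-adjoint, $(T-T^*)^*(T-T^*)=-(T-T^*)^2=0$, so $T=T^*$, and therefore $T$ is a symmetry. (Once $\alpha=1$ one may instead simply apply Theorem \ref{te31} to the contraction $T$, since then $\n(I-T^*T)=\h=\n(I-|\Re T|)$ reduces $T$ to a symmetry.)

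The step I expect to be the main obstacle is precisely the exclusion of $\|T\|>1$: the structure theory of Section 3 only becomes available after normalising to $T/\|T\|$, an operation that turns the isometry $S$ into a proper contraction and thus leaves Theorem \ref{te31} powerless. The resolution relies on the fact that $\eqref{ec11}$ pins $\|\Re T\|$ exactly to $\|T\|$, so that any near-maximiser of $\|\Re T\|$ must be a near-maximiser of both $\|T\,\cdot\,\|$ and $\|T^*\,\cdot\,\|$; the upper-triangular shape, with $S$ an isometry and $S^*R=0$, then shows that these two requirements are incompatible unless $\|T\|=1$.
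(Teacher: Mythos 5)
Your proof is correct, but it takes a genuinely different route from the paper's. The paper never normalizes $T$ nor determines $\|T\|$: it notes that $T^*T\ge I$ and \eqref{ec11} give $I\le |T|\le |\Re T|$, hence $|\Re T|\le |\Re T|^2=(\Re T)^2$, and then tests the resulting inequality $|T|\le (\Re T)^2$ against vectors $Sx$ with $x\in\mathcal{G}$ and against $y\in\mathcal{G}^{\perp}$. Using $S^*R=0$ and the contractivity of $Q$, this produces $I_{\mathcal{G}}\le \Re S^2$ and $I_{\mathcal{G}^{\perp}}\le \Re Q^2$, so $S^2=I$ and $Q^2=I$ by the Fong--Istr\u{a}\c{t}escu lemma (``a contraction $A$ with $\Re A\ge I$ equals $I$''), and then $R=0$ because $S^*R=0$ with $S$ now invertible; thus $T=S\oplus Q$ is a symmetry. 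You instead first pin down $\|T\|=1$ by an approximate-maximizer argument: \eqref{ec11} forces $\|\Re T\|=\|T\|$, any near-maximizer of $\|\Re T\,\cdot\,\|$ must nearly maximize both $\|T\,\cdot\,\|$ and $\|T^*\,\cdot\,\|$, and the upper-triangular form with $S$ an isometry and $S^*R=0$ makes these two requirements incompatible when $\|T\|>1$; after that, $T^*T\ge I$ forces $T$ to be an isometry with $(\Re T)^2=I$, and a short algebraic identity (or an appeal to Theorem \ref{te31}) finishes. Both arguments are sound. The paper's is entirely algebraic and pointwise, with the block structure used at the end; yours isolates the clean general statement that an isometry satisfying \eqref{ec11} is a symmetry, at the cost of a limiting argument --- and, notably, your normalization step does not use $T^*T\ge I$ at all, so it shows that $\|T\|=1$ already follows from \eqref{ec11} together with the triangular form, a small structural fact the paper's proof does not record.
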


\begin{proof}
By using \eqref{ec37} with $S,R,Q$ as above one obtains
$$
T^*T=
\begin{pmatrix}
I & 0\\
0 & R^*R+Q^*Q
\end{pmatrix},
\quad ({\rm Re}T)^2= \frac{1}{4}
\begin{pmatrix}
(S+S^*)^2+RR^* & SR+R(Q+Q^*)\\
R^*S^* + (Q+Q^*)R^* & R^*R+ (Q+Q^*)^2
\end{pmatrix}.
$$

Since $T^*T \ge I$ the condition $|T|\le |{\rm Re}T|$ implies $|{\rm Re}T|\ge I$. Therefore we have $|T|\le |{\rm Re}T|\le ({\rm Re}T)^2$ which gives for any $x\in \mathcal{G}$ (as $R^*S=0$),
\begin{eqnarray*}
\|x\|^2&=& \|Sx\|^2= \langle |T|Sx,Sx \rangle \le \langle ({\rm Re}T)^2 Sx,Sx\rangle \\
&=& \frac{1}{4} \langle ((S+S^*)^2 +RR^*)Sx,Sx \rangle =\frac{1}{4} \langle (S^3+S^* +2S)x, Sx\rangle \\
& =& \frac{1}{2} (\|x\|^2 + \langle ({\rm Re}S^2)x,x \rangle ).
\end{eqnarray*}
This means that $I_{\mathcal{G}} \le {\rm Re}S^2$, which by [10, Corollary 3] yields $S^2=I_{\mathcal{G}}$. As $S$ is an isometry it will be just a symmetry, and as $S^*R=0$ it follows $R=0$.

On the other hand, since $I\le ({\rm Re}T)^2$ we have for each $y \in \mathcal{G}^{\perp}$ ($Q$ being a contraction)
$$
\|y\|^2 \le \frac{1}{4}\langle (Q+Q^*)^2y,y \rangle \le \frac{1}{4} \langle (Q^2+Q^{*2} +2I)y,y \rangle,
$$
whence $I_{\mathcal{G}^{\perp}}\le {\rm Re} Q^2$. Then as above $Q$ is a symmetry, hence $T=S \oplus Q=T^*$. This ends the proof.
\end{proof}

In concordance with the representations \eqref{ec12} and \eqref{ec37} for Brownian isometries, from the previous proposition we derive the following

\begin{corollary}\label{co39}
A Brownian isometry of positive covariance which satisfies the condition \eqref{ec11} is a symmetry.
\end{corollary}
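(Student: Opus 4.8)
The plan is to reduce Corollary \ref{co39} directly to Proposition \ref{pr38}, so essentially all the work consists of checking that a Brownian isometry of positive covariance satisfies the hypotheses of that proposition. First I would recall the structure theorem quoted in the introduction (following [2, Proposition 5.37]): a Brownian isometry $T$ of covariance $\sigma>0$ is a $2$-isometry, hence in particular $T^{*2}T^2-2T^*T+I=0$, and with respect to a decomposition $\h=\h_0\oplus\h_1$ it has the block matrix form \eqref{ec12}, namely
\begin{align*}
T=
\begin{pmatrix}
V & \sigma E\\
0 & U
\end{pmatrix},
\end{align*}
with $V$ an isometry on $\h_0$, $E\in\mathcal{B}(\h_1,\h_0)$ an injective contraction with $\R(E)\subset\n(V^*)$, and $U$ unitary on $\h_1$.

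Next I would verify the two numerical/structural conditions required by Proposition \ref{pr38}. The condition $T^*T\ge I$ is immediate: for a $2$-isometry one has $T^{*n}T^n\ge I$ for all $n\ge 1$ (this follows from the $2$-isometry identity, which gives $T^{*(n+1)}T^{n+1}-T^{*n}T^n = T^{*n}(T^*T-I)T^n$ increasing in $n$, starting from $T^{*2}T^2-2T^*T+I=0$ rearranged as $T^*T-I=T^{*2}T^2-T^*T\ge 0$); in particular $n=1$ gives $T^*T\ge I$. Then I would match \eqref{ec12} against the template \eqref{ec37} by taking $\mathcal{G}=\h_0$, $S=V$, $R=\sigma E$, $Q=U$: here $S=V$ is indeed an isometry on $\mathcal{G}=\h_0$; the condition $S^*R=V^*(\sigma E)=\sigma V^*E=0$ holds precisely because $\R(E)\subset\n(V^*)$; and $Q=U$ is unitary, hence a contraction on $\mathcal{G}^\perp=\h_1$. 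Thus all the hypotheses of Proposition \ref{pr38} are met.

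Applying Proposition \ref{pr38} now yields at once that $T$ is self-adjoint, and in fact a symmetry, which is exactly the assertion of the corollary. (One could note additionally that the conclusion of Proposition \ref{pr38} forces $R=0$ and $S^2=I$, $Q^2=I$, so in the Brownian language $\sigma E=0$; since $E$ is injective this would actually be vacuous unless $\h_1=\{0\}$, but this observation is not needed for the statement.) I do not anticipate a genuine obstacle here: the only mild point of care is citing the $2$-isometry inequality $T^*T\ge I$ correctly and confirming that the roles of $\h_0,\h_1$ in \eqref{ec12} line up with $\mathcal{G},\mathcal{G}^\perp$ in \eqref{ec37} with the stated orthogonality $\R(E)\subset\n(V^*)$ delivering the crucial relation $S^*R=0$; once that bookkeeping is done the corollary is an immediate specialization of Proposition \ref{pr38}.
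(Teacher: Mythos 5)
Your reduction to Proposition \ref{pr38} is exactly the route the paper intends (it offers no separate proof, only the remark that the corollary follows from \eqref{ec12}, \eqref{ec37} and that proposition), and your verification of the hypotheses $S=V$ isometric, $S^*R=\sigma V^*E=0$ via $\R(E)\subset\n(V^*)$, and $Q=U$ a contraction is correct. The one blemish is your justification of $T^*T\ge I$, which as written is circular (the identity gives $T^*T-I=T^{*2}T^2-T^*T=T^*(T^*T-I)T$, so asserting the right-hand side is $\ge 0$ presupposes the conclusion); the cleanest fix is to compute directly from \eqref{ec12} that $T^*T=I\oplus(I+\sigma^2E^*E)\ge I$, or to use the standard argument that $\|T^nx\|^2$ is an arithmetic progression in $n$ and hence nondecreasing.
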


\begin{remark}\label{re310}
\rm
Proposition \ref{pr38} one refers to a larger class of operators than that of Brownian isometries, but not to all $2$-isometries, because we need to impose that $Q$ is a contraction.

Recall (see [1, Theorem 1.26]) that the block matrix of a $2$-isometry on $\h=\n(T^*T-I) \oplus \overline{\R(T^*T-I)}$ has the form \eqref{ec37}, with $S$ an isometry, $S^*R=0$, $R^*R+Q^*Q-I$ injective and
$$
Q^*(R^*R+Q^*Q-I)Q=R^*R+Q^*Q-I.
$$
But if such $T$ satisfies the condition \eqref{ec11}, then as in the previous proof $S$ will be a symmetry, hence $R=0$. In this case, the last relation before means $Q^*(Q^*Q-I)Q=Q^*Q-I$, that is $Q$ is a $2$-isometry and $Q^*Q\ge I$. So, the Fong-Tsui conjecture for $T$ one reduces to its $2$-isometric pure part $Q$.
\end{remark}

\medskip

\section{Final Remarks}

\medskip

In turn to Theorem \ref{te31} which plays an essential role for our considerations concerning the Fong-Tsui conjecture, we make some comments.

\begin{remark}\label{re41}
\rm
The operator $T^*$ has a similar form like $T$, under the condition \eqref{ec11}, and in the corresponding decomposition of $\h$. Indeed, we have from the proof of Theorem \ref{te31} the representation
\begin{align}\label{ec41}
T=
\begin{pmatrix}
W_1 & 0 & 0\\
0 & 0 & R_0\\
0 & 0 & Q
\end{pmatrix}
\end{align}
on the decomposition $\h=\h_1 \oplus \h_0 \oplus \h_2$ where $\h_1=\n(I-T^*T)$ and $\h_0=\n(T)$. To see here $\n(T^*)$ given by \eqref{ec32}, we refine \eqref{ec41} by considering $\n(T)=\n({\rm Re}T) \oplus \h_0'$ and $\h_2=\n(Q^*)\oplus \overline{\R(Q)}$. So, on the decomposition $\h=\h_1 \oplus \n({\rm Re}T)\oplus \n(Q^*)\oplus \h_0'\oplus \overline{\R(Q)}$ we have the representations
$$
T=
\begin{pmatrix}
W_1 & 0 & 0 & 0 & 0\\
0 & 0 & 0 & 0 & 0\\
0 & 0 & 0 & 0 & 0\\
0 & 0 & R_{00} & 0 & R_{01}\\
0 & 0 & Q_0 & 0 & Q_1
\end{pmatrix},
\quad T^*=
\begin{pmatrix}
W_1 & 0 & 0 & 0 & 0\\
0 & 0 & 0 & 0 & 0 \\
0 & 0 & 0 & R_{00}^* & Q_0^*\\
0 & 0 & 0 & 0 & 0\\
0 & 0 & 0 & R_{01}^* & Q_1 ^*
\end{pmatrix},
$$
where $R_{00}=P_{\h_0'}R_0|_{\n(Q^*)}$, $R_{01}=P_{\h_0'}R_0|_{\overline{\R(Q)}}$, and $Q_0=Q|_{\n(Q^*)}$, $Q_1=Q|_{\overline{\R(Q)}}$. Here we used the fact that $P_{\n({\rm Re}T)}R_0=0$ and $P_{\n(Q^*)}Q=0$. We get that $T^*$ has on $\h=\h_1 \oplus \n(T^*) \oplus \h_3$ with $\h_3=\h_0'\oplus \h_3$ the block matrix form
$$
T^*=
\begin{pmatrix}
W_1 & 0 & 0\\
0 & 0 & R_*\\
0 & 0 & Q_*
\end{pmatrix},
$$
where $R_*:= \begin{pmatrix} 0 & 0 \\ R_{00}^* & Q_0^* \end{pmatrix}$ and $Q_*:= \begin{pmatrix} 0 & 0 \\ R_{01}^* & Q_1^* \end{pmatrix}.$

Hence $T^*$ has the same form as $T$ in \eqref{ec41} on the corresponding decomposition $\h=\h_1 \oplus \n(T^*) \oplus \h_3$, which was obtained under the condition \eqref{ec11}, even if $T^*$ does not satisfy this condition.

These representations of $T$ and $T^*$ also provide that the invariant partial isometric parts for $T$ and $T^*$ can be different under the condition \eqref{ec11}, that is $\n(T) \neq \n(T^*)$, in general. But the condition $\n(T)=\n(T^*)$ is necessary for $T$ to be self-adjoint. In this latter case we have $R_0=0$ and $Q_0=0$, while $Q=Q_1$ like $Q^*$ are injective and pure contractions, hence
$$
\overline{\R(Q)}=\overline{\R(Q^*)}=\overline{\R(I-Q^*Q)}=\overline{\R(I-QQ^*)}.
$$
But only these information on $Q$ are not sufficient to obtain that $Q=Q^*$ that is $T=T^*$.

The major difficulty to use the condition \eqref{ec11} in order to obtain the self-adjointness of $T$ consists in the fact that $|{\rm Re}T|=[({\rm Re}T)^2]^{1/2}$ cannot be easily expressed in terms of $T$ (that is using the block matrix form \eqref{ec41}).

Clearly, on can have in view the polar decomposition ${\rm Re}T=U|{\rm Re}T|$, where $U$ is a symmetry on $\overline{\R({\rm Re}T)}$. Since $\n({\rm Re}T)=\n(U)$ reduces $U$ and $T$, one can just only consider the condition \eqref{ec11} on $\overline{\R({\rm Re}T)}$ which reduces $T$ and $U$.

But even in the case when $\n(T)$ reduces $T$ ($R_0=0$ in \eqref{ec39}), hence when $U$ is a symmetry on the subspace $\h_2$, we cannot use this to obtain $Q=Q^*$.

But, as we have seen before, this is possible for some classes of operators related to partial isometries, or for the operators $T$ which commute with $U$, as it was recently shown in [17]. Let us remark that this last class of operators and that of partial isometries are not contained one into the other. For example, one can consider the partial isometry $T\in \mathcal{B}(\h\oplus \h)$ and the corresponding partial isometry for ${\rm Re}T$ with the block matrices
$$
T=
\begin{pmatrix}
0 & I\\
0 & 0
\end{pmatrix},
\quad U=
\begin{pmatrix}
0 & I\\
I & 0
\end{pmatrix},
$$
and it is clear that $TU=I \oplus 0$ and $UT=0 \oplus I$.
\end{remark}

\begin{remark}\label{re42}
\rm
The condition \eqref{ec11} means that there exists a contraction $A$ on $\h$ satisfying the relation
\begin{align}\label{ec42}
A|{\rm Re}T|^{1/2}=|T|^{1/2},
\end{align}
or equivalently
\begin{align}\label{ec43}
|{\rm Re}T|^{1/2}A^*=|T|^{1/2}.
\end{align}
From these relations we have $\n(A^*) \subset \n(T)$ and also
$$
A\overline{\R({\rm Re}T)}\subset \overline{\R(T^*)} \subset \overline{\R({\rm Re}T)}
$$
that is $\overline{\R({\rm Re}T)}$ is invariant for $A$. Denoting
$$
A_0=A|_{\overline{\R({\rm Re}T)}},
$$
we infer that
$$
\overline{\R(T^*)} =\overline{A|{\rm Re}T|^{1/2}\h} =\overline{A_0\R({\rm Re}T)}= \overline{\R(A_0)}\subset \overline{\R(A)}.
$$

In addition, if $\n(T)$ reduces $T$ then
$$
\overline{\R(A)}=\overline{\R(A_0)}=\overline{\R(T^*)}=\overline{\R({\rm Re}T)},
$$
and some relationship between the closeness of the ranges of $T$, ${\rm Re}T$ and $A$ can be obtained, but unessential for the Fong-Tsui conjecture. However, from \eqref{ec42} we infer $|T|^2 \le A |{\rm Re}T|^2A^*$. So, if $A^*$ is a $|{\rm Re} T|^2$-contraction that is $A|{\rm Re}T|^2A^*\le |{\rm Re}T|^2$, then $|T|^2 \le |{\rm Re}T|^2$ and by [10, Theorem 1.3] it follows that $T$ is self-adjoint. In particular, if $A$ satisfies some conditions of self-adjointness, for instance those quoted in [13] then by \eqref{ec42} $A$ and $|{\rm Re}T|$ commute, therefore by the previous remark one has $|T|^2 \le |{\rm Re}T|^2$. But, in general the condition \eqref{ec11} does not impose other restriction on the contraction $A$ in \eqref{ec42}. In conclusion we doubt that \eqref{ec11} for $T$ implies always $T=T^*$.

Notice finally that an interesting context where it is possible to show that Fong-Tsui conjecture is true is that of $(A,m)$-expansive operators which were recently studied in [4] and [12], but we do not refer to them here.

Another interesting context for investigations on this conjecture is that of $A$-contractions (for some positive operator $A$ on $\h$), or even for $A$-bounded operators, which were extensively studied in the last years. Here is natural to use the concept of $A$-adjoint operator and $A$-projection, and an important role have the $A$-partial isometries recently investigated in [5, 6], [7], [9], or just the quasi-isometries in the context of $A$-contractions which were studied in [16].

\end{remark}



\bigskip

\section*{References}
\medskip

[1] J. Agler and M. Stankus, {\it $m$-Isometric Transformations of Hilbert spaces}, Integr. Equat. Oper. Theory, 21, 4 (1995), 383-429.

[2] J. Agler and M. Stankus, {\it $m$-Isometric Transformations of Hilbert spaces II}, Integr. Equat. Oper. Theory, 23, 10 (1995), 1-48.

[3] J. Agler and M. Stankus, {\it $m$-Isometric Transformations of Hilbert spaces III}, Integr. Equat. Oper. Theory, 24 (1996), 379-421.

[4] O. A. M. Sid Ahmed and Adel Saddi, {\it $A-m$-Isometric operators in semi-Hilbertian spaces}, Linear Algebra Appl.
Vol. 436, Issue 10, Pages 3930–3942.

[5] M.L. Arias, G. Corach, and M.C. Gonzalez, Partial isometries in semi-Hilbertian spaces,
Linear Algebra Appl. 428 (2008), pp. 1460–1475.

[6] M. L. Arias, G Corach, and M.C. Gonzalez {\it Lifting properties in operator ranges}, Acta Sci.
Math. (Szeged) 75 (2009), pp. 635–653.

[7] M. L. Arias and M. Mbekhta, {\it $A$-partial isometries and generalized inverses}, Linear Algebra Appl.
Vol. 439, Issue 5, Pages 1286–1293

[8] G. Cassier and L. Suciu, {\it Mapping theorems and similarity to contractions for classes
of $A$-contractions}, Theta Series in Advanced Mathematics, (2008), 39-58.

[9] G. Corach, G. Fongi, and A. Maestripieri, {\it Weighted projections into closed subspaces}, Studia Math. 216 (2013), 131-148.

[10] C. K. Fong and V. Istr\u{a}\c tescu, {\it Some characterizations of hermitian operators and related classes of operators. I}, Proc. Amer. Math. Soc., 76, 1 (1979), 107-112.

[11] C. K. Fong and S. K. Tsui, {\it A note on positive operators}, J. Operator Theory, 5 (1981), 73-76.

[12] S. Jung, Y. Kim, E. Ko, and J. E. Lee, {\it On $(A,m)$-expansive operators}, Studia Mathematica, 213 (1) (2012), 3-23.

[13] I, H. Jeon, I. H. Kim, I. K. Tanahashi, and A. Uchiyama, {\it Conditions Implying Self-adjointness of Operators}, Integr. Equ. Oper. Theory, 61 (2008), 549-557.

[14] C. S. Kubrusly, {\it An introduction to Models and Decompositions in Operator Theory}. Birkh\"auser, Boston, 1997.

[15] M. Mbekhta and L. Suciu, {\it Classes of operators similar to partial isometries}, Integr. Equ. Oper. Theory, Vol. 63, No. 4, (2009), 571-590.

[16] M. Mbekhta and L. Suciu, {\it Quasi-isometries associated to $A$-contractions}, Linear Algebra and its Applications, Vol. 459, (2014), 430-453.

[17] M. H. Mortad, {\it A contribution to the Fong-Tsui conjecture related to self-adjoint operators}, arXiv:1208.4346v1 [math.FA], 21 Aug. 2012, 1-4.

[18] S. M. Patel, {\it A note on quasi-isometries}. Glasnik
    Matematicki, 35 (55) (2000), 307-312.

[19] S. M. Patel, {\it A note on quasi-isometries II}. Glasnik
    Matematicki, 38 (58) (2003), 111-120.

[20] L. Suciu, {\it Maximum subspaces related to $A$-contractions and quasinormal operators}, Journal of the Korean Mathematical Society, 45 (2008), No.1, 205-219.

[21] L. Suciu, {\it Maximum $A$-isometric part of an $A$-contraction and
applications}, Israel Journal of Mathematics, vol. 174, (2009), 419-442.

\end{document}